\documentclass[11pt]{article}

\usepackage[T1]{fontenc}
\usepackage[utf8]{inputenc}
\usepackage{lmodern}
\usepackage{microtype}
\usepackage[a4paper,margin=28mm]{geometry}
\usepackage{amsmath,amssymb,amsthm,mathtools,mathrsfs}
\usepackage{bm}
\usepackage{booktabs}
\usepackage{graphicx}
\usepackage{xcolor}
\usepackage{subcaption}
\usepackage{enumitem}
\usepackage{placeins}
\usepackage[numbers,sort&compress]{natbib}
\usepackage[hidelinks]{hyperref}
\usepackage[capitalise,noabbrev,nameinlink]{cleveref}

\hypersetup{
  pdftitle={High-order energy diminishing ALE-SAV finite element method for two-phase Navier-Stokes flow},
  pdfauthor={Author names omitted for review},
  pdfkeywords={two-phase Navier--Stokes equations, arbitrary Lagrangian--Eulerian finite elements, scalar auxiliary variable, Crank--Nicolson method, surface tension}
}

\graphicspath{{figures/}}
\allowdisplaybreaks
\numberwithin{equation}{section}
\newtheorem{theorem}{Theorem}[section]
\newtheorem{lemma}[theorem]{Lemma}

\theoremstyle{definition}

\newtheorem{remark}[theorem]{Remark}
\crefname{assumption}{assumption}{assumptions}
\Crefname{assumption}{Assumption}{Assumptions}

\newcommand{\R}{\mathbb R}
\newcommand{\Om}{\Omega}
\newcommand{\Gam}{\Gamma}
\newcommand{\uh}{\bm u_h}
\newcommand{\vh}{\bm v_h}
\newcommand{\wh}{\bm w_h}
\newcommand{\Xh}{\bm X_h}
\newcommand{\etah}{\bm\eta_h}
\newcommand{\betah}{\bm\beta_h}
\newcommand{\M}{\mathcal M}

\newcommand{\F}{\mathcal F}
\newcommand{\C}{\mathcal C}
\newcommand{\Rop}{\mathcal R}

\newcommand{\norm}[1]{\left\lVert #1\right\rVert}
\newcommand{\abs}[1]{\left\lvert #1\right\rvert}
\newcommand{\dd}{\,\mathrm d}
\newcommand{\Div}{\operatorname{div}}
\newcommand{\id}{\operatorname{id}}

\def\x{{\bf x}}

\def\K{{\mathcal K}}

\title{
High-order energy diminishing ALE-SAV
finite element methods for two-phase Navier--Stokes flow
}
\author{
Shu Ma\thanks{Department of Mathematics, Hong Kong Baptist University, Kowloon Tong, Hong Kong.}}
\date{}

\begin{document}
\maketitle

\begin{abstract}
We develop a high-order fully discrete ALE-SAV finite element method for sharp-interface two-phase Navier--Stokes flow that satisfies a discrete energy dissipation law. We introduce an SAV reformulation to ensure the stable treatment of the surface and gravitational potential energies, together with an energy-stable ALE transfer operator that controls the density-weighted kinetic energy across moving meshes. The resulting scheme is linear and satisfies a modified energy-dissipation inequality. Numerical experiments, including rising-bubble benchmarks, demonstrate the robustness of the method and confirm second-order convergence in time and arbitrarily high-order convergence in space.
\end{abstract}

\noindent\textbf{Keywords.}
two-phase Navier--Stokes flow, arbitrary Lagrangian--Eulerian method,
finite element methods, scalar auxiliary variable, energy diminishing

\medskip

\section{Introduction}\label{sec:introduction}
Flows of two immiscible fluids separated by a moving interface arise in many
scientific and engineering applications.  Two broad descriptions are commonly
used: diffuse-interface and sharp-interface models
\cite{GrossReusken2011}.  In a diffuse-interface model, the two fluids are
separated by a thin transition layer in which material parameters such as
density and viscosity vary rapidly but smoothly.  In a sharp-interface model,
the interface is a hypersurface of zero thickness across which these parameters
may be discontinuous.

We consider two immiscible fluids in a fixed bounded polygonal
(\(d=2\)) or polyhedral (\(d=3\)) container
\(\Om\subset\R^d\).  At time \(t\), a smooth closed
interface \(\Gam(t)\subset\Om\) separates the phase domains, so that
\(\Om=\Om_-(t)\cup\Gam(t)\cup\Om_+(t)\).  The outer boundary is decomposed
into relatively open, disjoint unions of flat boundary faces
\(\Gam_1\) and \(\Gam_2\), with
\(\partial\Om=\overline{\Gam_1}\cup\overline{\Gam_2}\).
A two-dimensional configuration is shown in \Cref{fig_domain}.
The velocity \(\bm u\) and pressure \(p\) satisfy the sharp-interface
problem
\begin{align}\label{PDE}
\left\{
\begin{aligned}
\rho_\pm\bigl\{\partial_t\bm u+(\bm u\cdot\nabla)\bm u\bigr\}
-\nabla\cdot\bm\sigma_\pm
&=\rho_\pm\bm f
&&\text{in }\Om_\pm(t),\\
\nabla\cdot\bm u&=0
&&\text{in }\Om_\pm(t),\\
[\bm u]_-^+&=\bm0
&&\text{on }\Gam(t),\\
[\bm\sigma\bm\nu]_-^+&=\gamma\kappa\bm\nu
&&\text{on }\Gam(t),\\
\bm u&=\bm0
&&\text{on }\Gam_1,\\
\bm u\cdot\bm n=0,\qquad
\bm\sigma\bm n\cdot\bm\tau&=0
&&\text{on }\Gam_2,\\
\bm u(\cdot,0)&=\bm u^0
&&\text{in }\Om.
\end{aligned}
\right.
\end{align}
Here \(\gamma\) is the surface tension coefficient.  The gravitational force is
\(\bm f=-g\bm e_d\), where \(g>0\) is the gravitational acceleration and
\(\bm e_d=(0,\ldots,0,1)^{\mathsf T}\in\mathbb R^d\) is the unit vector in
the \(x_d\)-direction.  The physical stress is
\[
 \bm\sigma_\pm
 =2\mu_\pm\bm D(\bm u)-p_\pm\bm I,
 \qquad
 \bm D(\bm u)=\frac12\bigl(\nabla\bm u+(\nabla\bm u)^\top\bigr).
\]
We write \(\rho=\rho_\pm\) and \(\mu=\mu_\pm\) in \(\Om_\pm(t)\), and
\([q]_-^+=q_+-q_-\). The unit normal \(\bm\nu\) points from
\(\Om_-(t)\) to \(\Om_+(t)\); \(\bm n\) is the outer unit normal on
\(\partial\Om\), and \(\bm\tau\) is any tangential vector on \(\Gam_2\).
The interface velocity \(\mathcal V_\Gam\) satisfies
\(\mathcal V_\Gam\cdot\bm\nu=\bm u\cdot\bm\nu\),
and we use the curvature convention
$ -\Delta_s\id=\kappa\bm\nu$.
Thus \(\kappa>0\) when \(\Om_-(t)\) is locally convex.  The tangential
velocity changes only the parametrization of \(\Gam(t)\) and may be chosen
freely.
\begin{figure}[htp]
\centering \includegraphics[width=2.15in]{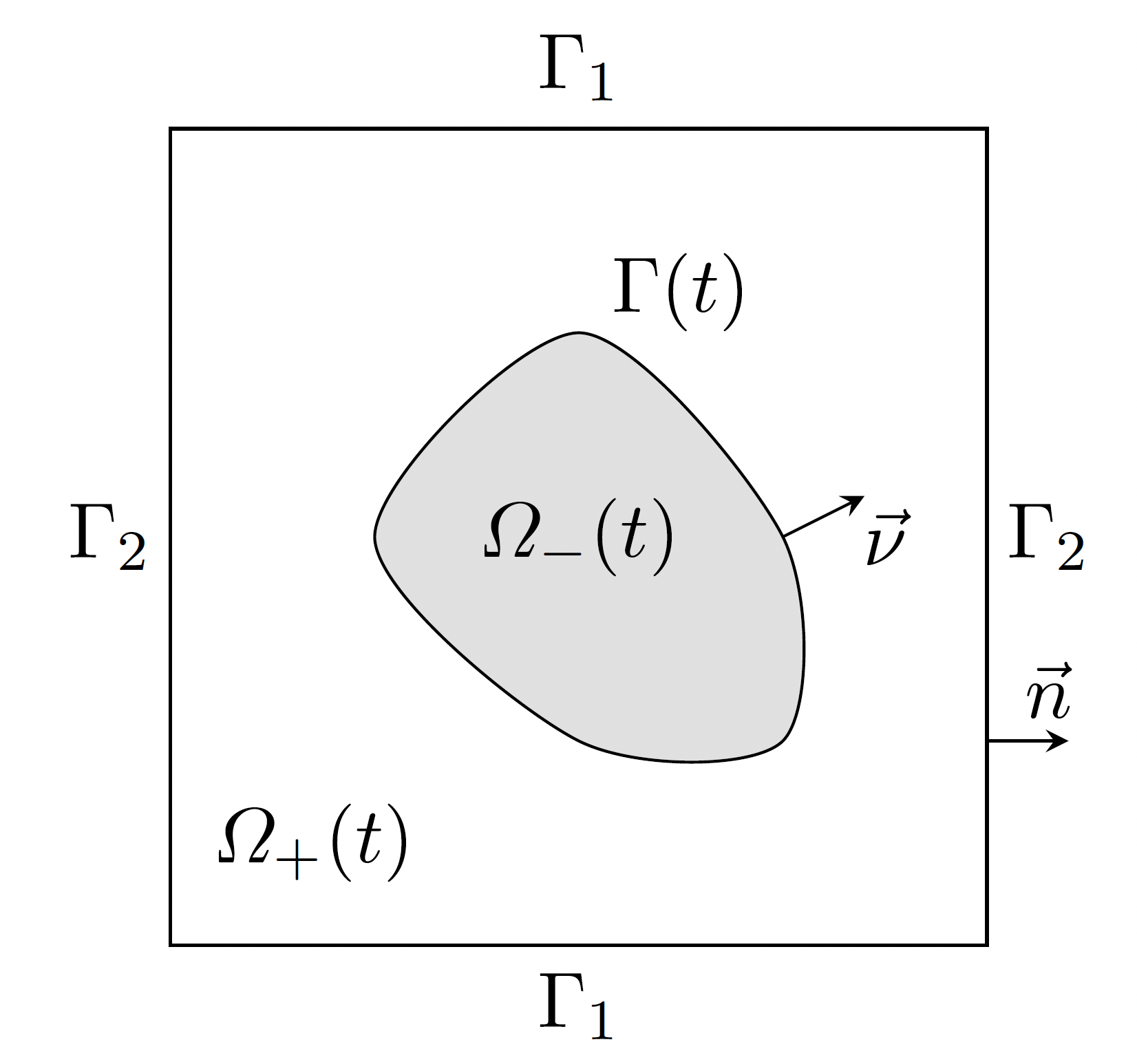}
\vspace{-4pt}
\caption{The domain \(\Omega\) in the case \(d=2\).}
\label{fig_domain}
\end{figure}

Numerical methods for \eqref{PDE} can generally be classified as Eulerian,
Lagrangian, or arbitrary Lagrangian--Eulerian (ALE) methods.  In the ALE
approach, the mesh follows the moving interface, whereas the bulk mesh velocity
need not coincide with the fluid velocity.  The interface is thus tracked
explicitly while the interior mesh can be adjusted as the flow evolves.

Existing work on fitted moving-interface methods addresses mesh motion and stability, as well as the approximation of surface tension. The ALE finite element formulation for incompressible viscous flow goes back to \cite{HughesLiuZimmermann1981}. Geometric conservation and temporal stability are studied in \cite{BoffiGastaldi2004,BadiaCodina2006,FormaggiaNobile2004,BonitoKyzaNochetto2013}; ALE methods for moving interfaces and two-phase flow include \cite{GanesanTobiska2012,Fu2020}. Harmonic mesh motion and high-order evolving or isoparametric finite element spaces are analyzed in \cite{Edelmann2022,ElliottRanner2021,LiXiaYang2023,LiMaQiu2026}. On the fitted interface, surface tension can be treated through the first variation of interfacial area. This treatment uses the tangential-gradient identity of \cite{Dziuk1991}; see also \cite{BarrettGarckeNurnberg2020}. Parametric methods for capillary and two-phase flow include \cite{Bansch2001,BarrettGarckeNurnberg2015,AgneseNurnberg2020,GarckeNurnbergZhao2023}. The continuous energy law of \eqref{PDE} gives the stability criterion for its discretization.

The two-phase Navier--Stokes flow obeys an energy-dissipation law.  Indeed,
testing the first equation in \eqref{PDE} by \(\bm u\) and using the interface
and boundary conditions, together with the
identity
\[
 \frac{\dd}{\dd t}\abs{\Gam(t)}
 =\int_{\Gam(t)} \nabla_{\Gam(t)}\id : \nabla_{\Gam(t)}\bm u \dd s
 =\int_{\Gam(t)}\kappa \bm\nu\cdot\bm u \dd s,
\]
where $\id$ is the identity map on \(\Gam(t)\), one obtains
\begin{equation}
\frac{\dd}{\dd t} \Big( \frac12\int_{\Om(t)}\rho\abs{\bm u}^2\dd x + \gamma\abs{\Gam(t)} \Big)
 +2\int_{\Om(t)}\mu\abs{\bm D(\bm u)}^2\dd x
- \int_{\Om(t)}\rho \bm f\cdot\bm u\dd x = 0.
\end{equation} 
The transport formula gives
\[
\frac{\dd}{\dd t}\int_{\Om_\pm(t)}\rho_\pm g x_d\,\dd x
=\int_{\Om_\pm(t)}\rho_\pm g \partial_t^{\bullet} x_d \,\dd x
 =-\int_{\Om_\pm(t)}\rho_\pm\bm f\cdot\bm u\,\dd x,
\]
where \(\partial_t^{\bullet}\) denotes the material derivative along the fluid
particle trajectories.
It follows that the
total energy (including the kinetic and potential energies)
\begin{equation}\label{eq:physical-energy}
 E(t)=\frac12\int_{\Om(t)}\rho\abs{\bm u}^2\dd x
                 +\gamma\abs{\Gam(t)}+G(t),
 \qquad
 G(t)=\int_{\Om(t)}\rho g x_d \dd x.
\end{equation} 
satisfies the following inequality:
\begin{equation}\label{eq:continuous-physical-law}
 \frac{\dd}{\dd t}E(t) \le 0.
\end{equation}
Hence, the total energy of the system decays as time increases, due to the viscosity of the fluid.
An analogous energy-diminishing property is desirable for the numerical scheme, especially for solutions with limited regularity, for which convergence analysis often relies on a discrete energy inequality.

To retain the energy-diminishing structure on a moving fitted mesh, both velocity transfer and interfacial work must be treated consistently. Since the velocity belongs to different finite element spaces at successive time levels, its transfer must control the density-weighted kinetic energy. The discrete capillary and gravitational terms must also agree with the geometric variation used to update the interface. For variable-density two-phase Navier--Stokes flow, \cite{DuanLiYang2022} constructed a linear, first-order energy-diminishing ALE method using a square-root Jacobian factor. This factor is closely related to the half-density composition underlying our energy-stable ALE transfer operator. Related fitted methods for free boundaries and moving contact lines are given in \cite{ZhaoRen2020,HuLeiLiTang2026,GarckeHuLei2026}; \cite{GarckeHuLei2026} also account for gravitational potential energy. Second- and higher-order structure-preserving methods have been developed for geometric flows \cite{JiangSuZhang2024JCP,MaRao2026} and sharp-interface two-phase Stokes flow \cite{GarckeTrautweinZhang2026}. The former do not include fluid coupling, while the latter omit inertia.  As far as we know, there is no higher-order fully discrete ALE method which preserves the energy
diminishing structure of the two-phase Navier--Stokes flow at the discrete level.

As a first attempt to address this issue, we propose a second-order energy-diminishing ALE--SAV finite element method for sharp-interface two-phase Navier--Stokes flow. The SAV approach introduces an auxiliary scalar for the square root of an energy functional, leading to linear schemes with a modified energy \cite{ShenXuYang2018}. Related formulations for variable-density diffuse-interface flow and relaxation of the auxiliary energy are given in \cite{YangDong2019,JiangZhangZhao2022}.
We combine Crank--Nicolson methods with degree-\(k\) isoparametric finite elements, where \(k\geq1\). We introduce an SAV reformulation for an energy-stable treatment of the surface and gravitational energies, together with an energy-stable ALE transfer operator based on projecting the half-density composition in the density-weighted product to control the kinetic energy across moving meshes. The proposed method is shown to satisfy the desired
energy inequality under moving mesh, and the numerical results indicate approximately
second-order convergence in time and convergence of arbitrarily high order in space. Several numerical
experiments are provided to illustrate the performance of the method on benchmark examples.

The rest of this article is organized as follows.
The continuous ALE--SAV reformulation and its energy law are developed in
Section~\ref{sec:model}.  The fully discrete method and its modified-energy
inequality are presented in
Section~\ref{sec:fully-discrete}.  Implementation details and numerical experiments
are reported in Section~\ref{sec:numerics}.

\section{The ALE--SAV formulation of the sharp-interface model}\label{sec:model}

\subsection{Geometric energy and SAV reformulation}\label{sec:continuous-sav}
Let \(\Gam^0=\Gam(0)\), and represent the moving interface by
\[
 \bm X(\cdot,t):\Gam^0\to\R^d,
 \qquad \Gam(t)=\Gam[\bm X(t)]:=\bm X(\Gam^0,t),
 \qquad \bm X^0:=\bm X(\cdot,0)=\id_{\Gam^0}.
\]
A variation \(\bm\eta\) of \(\bm X\) on \(\Gam^0\) induces the field \(\bm\eta\circ\bm X^{-1}\) on \(\Gam[\bm X]\).

The geometric part of \eqref{eq:physical-energy} is
\begin{equation}\label{eq:total-geometric-energy}
 \F(\bm X)=\gamma\abs{\Gam[\bm X]}+G(\bm X).
\end{equation}
To obtain a quadratic geometric term in the time-discrete energy,
choose \(C_0>0\) sufficiently large such that
\(\F(\bm X)+C_0>0\), and define
\begin{equation}\label{eq:continuous-sav-definitions}
 R(\bm X)=\sqrt{\F(\bm X)+C_0}.
\end{equation}
Here \(R(\bm X)\) is determined by the interface; in the reformulation,
\(r=r(t)\) is an independent scalar unknown.

To specify its evolution, we calculate \(D\F\).  Writing
\(G(\bm X):=G(\Gam[\bm X])\), we have
\[
G(\bm X)=\rho_+\int_\Om g x_d\dd x
 +(\rho_--\rho_+)\int_{\Om_-[\bm X]}g x_d\dd x.
\]
Hence, the shape derivative in the direction \(\bm\eta\) is
\begin{equation}\label{eq:potential-shape-derivative}
 DG(\bm X)[\bm\eta]= (\rho_--\rho_+)
   \int_{\Gam[\bm X]}g x_d
   (\bm\eta\circ\bm X^{-1})\cdot\bm\nu\dd s.
\end{equation}
Together with the surface variation, this gives
\begin{equation}\label{eq:geometric-first-variation}
 D\F(\bm X)[\bm\eta]=\int_{\Gam[\bm X]}
 \bigl\{\gamma\kappa+(\rho_--\rho_+)g x_d\bigr\}
 (\bm\eta\circ\bm X^{-1})\cdot\bm\nu\dd s.
\end{equation}
Set \(L_{\bm X}[\bm\eta]:=D\F(\bm X)[\bm\eta]/R(\bm X)\).
Along a smooth interface path, the chain rule gives
\[
 \frac{\dd}{\dd t}R(\bm X(t))
 =\frac{D\F(\bm X(t))[\bm X_t]}{2R(\bm X(t))}
 =\frac12L_{\bm X(t)}[\bm X_t].
\]
This motivates treating \(r=r(t)\) as an independent scalar
unknown and imposing
\(r_t=\frac12L_{\bm X}[\bm X_t]\) and
\(r(0)=R(\bm X^0)\).

The gravitational force can be absorbed into the pressure.  Since
\(\bm f=-\nabla(gx_d)\), set \(\pi_\pm=p_\pm+\rho_\pm g x_d\), with
\[
 \bm\sigma_\pm^{\rm sh}=2\mu_\pm\bm D(\bm u)-\pi_\pm\bm I
 =\bm\sigma_\pm-\rho_\pm g x_d\bm I.
\]
After fixing the common additive constant by \(\int_\Om\pi\dd x=0\),
we henceforth write \(p\) and \(\bm\sigma\) for the shifted pressure
\(\pi\) and stress \(\bm\sigma^{\rm sh}\), respectively.  The interface
jump becomes
\begin{equation}\label{eq:shifted-stress-jump}
 [\bm\sigma\bm\nu]_-^+
 =\bigl\{\gamma\kappa+(\rho_--\rho_+)g x_d\bigr\}\bm\nu.
\end{equation}
Since the work of this shifted interfacial traction is
\(D\F(\bm X)[\bm v\circ\bm X]\), the SAV reformulation replaces it by
\(r(t)L_{\bm X}[\bm v\circ\bm X]\), thereby multiplying the traction by
\(r(t)/R(\bm X(t))\).  The shifted bulk equations remain unchanged.  We
choose the material parametrization \(\bm X_t=\bm u\circ\bm X\); its
tangential component affects only the parametrization of the interface.
With the outer boundary conditions and initial velocity from \eqref{PDE},
the reformulated system is
\begin{equation}\label{eq:pre-ale-sav-system}
\left\{
\begin{aligned}
 \rho_\pm\bigl\{\partial_t\bm u+(\bm u\cdot\nabla)\bm u\bigr\}
 -\nabla\cdot\bm\sigma&=\bm0
 &&\text{in }\Om_\pm(t),
 \\
 \Div\bm u&=0
 &&\text{in }\Om_\pm(t),\\
 [\bm u]_-^+&=\bm0
 &&\text{on }\Gam(t),\\
 [\bm\sigma\bm\nu]_-^+
 &=\frac{r(t)}{R(\bm X(t))}
   \bigl\{\gamma\kappa+(\rho_--\rho_+)g x_d\bigr\}\bm\nu
 &&\text{on }\Gam(t),
 \\
 \bm X_t(\xi,t)&=\bm u(\bm X(\xi,t),t)
 &&\text{for }\xi\in\Gam^0,
 \\
 r_t&=\frac12L_{\bm X}[\bm X_t],
 \qquad
 r(0)=R(\bm X^0).
 \end{aligned}
\right.
\end{equation}

\subsection{The ALE formulation}
Let \(\Om_\pm^0=\Om_\pm(0)\) and
\(\Om^0=\Om_-^0\cup\Gam^0\cup\Om_+^0=\Om\).
Under the material parametrization in
\eqref{eq:pre-ale-sav-system}, the interface velocity is
\[
 \mathcal V_\Gam=\bm X_t\circ\bm X^{-1}
 =\bm u|_{\Gam(t)}.
\]
A mesh velocity \(\bm w\) with trace \(\mathcal V_\Gam\) generates the ALE
flow map through
\begin{equation}\label{eq:continuous-ale-flow-map}
\left\{
\begin{aligned}
 \partial_t\bm\phi(\bm x^0,t)
 &=\bm w(\bm\phi(\bm x^0,t),t) &&\text{in }\Om^0,\\
 \bm\phi(\bm x^0,0)&=\bm x^0 &&\text{in }\Om^0,\\
 \bm\phi(\bm X^0(\xi),t)&=\bm X(\xi,t)
 &&\text{for }\xi\in\Gam^0,
\end{aligned}
\right.
\end{equation}
where we choose \(\bm w\) as the phasewise harmonic extension of
\(\mathcal V_\Gam\) by
\begin{equation}\label{eq:continuous-harmonic}
\left\{
\begin{aligned}
 -\Delta\bm w^\pm&=\bm0 &&\text{in }\Om_\pm(t),\\
 \bm w^\pm&=\mathcal V_\Gam &&\text{on }\Gam(t),\\
 \bm w&=\bm0 &&\text{on }\Gam_1,\\
 \bm w\cdot\bm n&=0,\qquad
 (\nabla\bm w)\bm n\cdot\bm\tau=0 &&\text{on }\Gam_2.
\end{aligned}
\right.
\end{equation}
Thus a no-slip wall fixes the mesh pointwise, whereas the homogeneous
free-slip conditions keep \(\Gam_2\) fixed as a geometric wall while
allowing tangential mesh motion.
The harmonic extension is a mesh-motion choice and
contributes no physical dissipation.
We assume that
\(\bm\phi(\cdot,t):\Om_\pm^0\to\Om_\pm(t)\) is bi-Lipschitz and
orientation preserving, with
\[
 \bm\phi(\Om_\pm^0,t)=\Om_\pm(t),\qquad
 \rho(\bm\phi(\bm x^0,t),t)=\rho^0(\bm x^0),\qquad
 J(\bm x^0,t)=\det\nabla_{\bm x^0}\bm\phi(\bm x^0,t)>0.
\]
The boundary partition is preserved: \(\bm\phi|_{\Gam_1}=\id\) and
\(\bm\phi(\Gam_2,t)=\Gam_2\).
Here \(\rho^0=\rho(\cdot,0)\).

The ALE derivative and relative velocity are
\[
\partial_t^{\bullet}\bm u
 =\partial_t\bm u+(\bm w\cdot\nabla)\bm u,
 \qquad
 \bm\beta=\bm u-\bm w.
\]
Thus the material acceleration is
\(\partial_t\bm u+(\bm u\cdot\nabla)\bm u
=\partial_t^{\bullet}\bm u+(\bm\beta\cdot\nabla)\bm u\).
To recover the kinetic-energy chain rule, introduce the half-density
ALE derivative
\begin{equation}\label{eq:half-density-derivative}
\mathcal D_t^{\bullet}\bm u
 =\partial_t^{\bullet} \bm u
  +\frac12(\Div\bm w)\bm u
 =\left[J^{-1/2}\partial_t
   \left(J^{1/2}(\bm u\circ\bm\phi)\right)\right]
   \circ\bm\phi^{-1}.
\end{equation}
Phase preservation gives the kinetic chain rule
\begin{equation}\label{eq:continuous-kinetic-chain}
 \int_\Om\rho\,\mathcal D_t^{\bullet}\bm u\cdot\bm u\dd x
 =\frac12\frac{\dd}{\dd t}
   \int_\Om\rho|\bm u|^2\dd x.
\end{equation}

\subsection{The ALE--SAV weak formulation}\label{sec:continuous-weak}

By \eqref{eq:half-density-derivative}, the material
acceleration in \eqref{eq:pre-ale-sav-system} can be written as
\(\mathcal D_t^\bullet\bm u+(\bm\beta\cdot\nabla)\bm u
-\tfrac12(\Div\bm w)\bm u\).  We use this representation to formulate the
weak problem.  Define
\begin{align*}
 \mathbf V
 &=\bigl\{\bm v\in H^1(\Om)^d:
       \bm v=\bm0\ \text{on }\Gam_1,\ 
       \bm v\cdot\bm n=0\ \text{on }\Gam_2\bigr\},\\
 P&=L_0^2(\Om),\\
 \mathbf W(t;\bm v_\Gam)
 &=\bigl\{\bm z:\bm z^\pm\in H^1(\Om_\pm(t))^d,
       \ \bm z^\pm=\bm v_\Gam\ \text{on }\Gam(t),\\
 &\hspace{5.5em}\bm z=\bm0\ \text{on }\Gam_1,
       \ \bm z\cdot\bm n=0\ \text{on }\Gam_2\bigr\},\\
 \mathbf W_0(t)&=\mathbf W(t;\bm0).
\end{align*}
For a bulk domain \(D\), let \((\cdot,\cdot)_D\) and \(\norm{\cdot}_D\)
denote the \(L^2(D)\) inner product and norm, respectively.
We use \(\langle\cdot,\cdot\rangle_{\Gam^0}\) for the
\(L^2(\Gam^0)\) inner product on the reference interface.

The material interface choice gives \(\bm\beta=\bm0\) on
\(\Gam(t)\) and \(\Gam_1\), while \(\bm\beta\cdot\bm n=0\) on \(\Gam_2\).
Using \(\Div\bm u=0\), phasewise integration by parts gives
\begin{equation}\label{eq:continuous-skew-equivalence}
 \frac12\bigl\{(\rho(\bm\beta\cdot\nabla)\bm u,\bm v)_\Om
 -(\rho(\bm\beta\cdot\nabla)\bm v,\bm u)_\Om\bigr\}
 =\bigl(\rho\{(\bm\beta\cdot\nabla)\bm u
       -\tfrac12(\Div\bm w)\bm u\},\bm v\bigr)_\Om.
\end{equation}
The left-hand side is skew-symmetric in \(\bm u\) and
\(\bm v\), and therefore vanishes for \(\bm v=\bm u\) without an
additional divergence constraint.

The weak ALE--SAV formulation seeks
\(\bm u(t)\in\mathbf V\), \(p(t)\in P\),
\(\bm w(t)\in\mathbf W(t;\bm u|_{\Gam(t)})\), and sufficiently regular
\((\bm X(t),r(t),\bm\phi(t))\) such that, for all
\(\bm v\in\mathbf V\), \(q\in P\),
\(\bm\eta\in L^2(\Gam^0)^d\), \(s\in\R\),
\(\bm z\in\mathbf W_0(t)\), and \(\bm\psi\in L^2(\Om^0)^d\),
\begin{subequations}\label{eq:continuous-sav-system}
\begin{align}
 \begin{aligned}[b]
 &(\rho\mathcal D_t^{\bullet}\bm u,\bm v)_\Om +\frac12\bigl\{(\rho(\bm\beta\cdot\nabla)\bm u,\bm v)_\Om
   -(\rho(\bm\beta\cdot\nabla)\bm v,\bm u)_\Om\bigr\}\\
 &\quad+2(\mu\bm D(\bm u),\bm D(\bm v))_\Om
  -(p,\Div\bm v)_\Om +rL_{\bm X}[\bm v\circ\bm X]
 \end{aligned}
 &=0,\label{eq:continuous-sav-momentum}\\
 (\Div\bm u,q)_\Om&=0,\\
 \left\langle\bm X_t-\bm u\circ\bm X,\bm\eta\right\rangle_{\Gam^0}
 &=0,\\
 \left(r_t-\frac12L_{\bm X}[\bm X_t]\right)s&=0,\\
 \sum_{\pm}(\nabla\bm w^\pm,\nabla\bm z^\pm)_{\Om_\pm(t)}&=0,
 \label{eq:continuous-sav-harmonic-weak}\\
 \left(\partial_t\bm\phi-\bm w\circ\bm\phi,
       \bm\psi\right)_{\Om^0}&=0.
\end{align}
\end{subequations}
The weak problem is supplemented with the initial data and
interface compatibility conditions
\[
\begin{alignedat}{2}
 \bm u(\cdot,0)&=\bm u^0,
 &\qquad \bm X(\cdot,0)&=\bm X^0=\id_{\Gam^0},\\
 r(0)&=R(\bm X^0),
 &\qquad \bm\phi(\cdot,0)&=\id_{\Om^0},\\
 \bm\phi(\bm X^0(\xi),t)&=\bm X(\xi,t),
 &\qquad&\xi\in\Gam^0.
\end{alignedat}
\]

\subsection{Equivalence and continuous energy law}
\begin{lemma}\label{thm:continuous-sav}
Every sufficiently regular solution of
\eqref{eq:continuous-sav-system} satisfies
\(r(t)=R(\bm X(t))\).  Hence, the SAV system is equivalent to
the shifted sharp-interface model equipped with the material
parametrization and the ALE construction above; conversely, any sufficiently
regular solution of this augmented model satisfies the SAV
system upon setting \(r=R(\bm X)\).  Moreover,
\begin{equation}\label{eq:continuous-modified-law}
 \frac{\dd}{\dd t}
 \left\{\frac12\int_\Om\rho\abs{\bm u}^2\dd x
 +r^2-C_0\right\}
 +2\int_\Om\mu\abs{\bm D(\bm u)}^2\dd x
 =0.
\end{equation}
Since \(r=R(\bm X)\), the energy in braces is \(E(t)\),
and hence \(E(t)\) is non-increasing.
\end{lemma}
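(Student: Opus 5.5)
We argue in three steps: first show that \(r\) and \(R(\bm X)\) obey the same ODE with the same initial value, then pass between the two systems, and finally derive the modified law by testing with the solution itself.

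\emph{Step 1: \(r=R(\bm X)\).} Since \(s\in\R\) is arbitrary, the fourth equation of \eqref{eq:continuous-sav-system} gives \(r_t=\tfrac12L_{\bm X}[\bm X_t]\). Along a sufficiently regular path \(\bm X(t)\), the chain rule computed in Section~\ref{sec:continuous-sav} gives
\[
\tfrac{\dd}{\dd t}R(\bm X(t))=D\F(\bm X)[\bm X_t]/(2R(\bm X))=\tfrac12L_{\bm X}[\bm X_t].
\]
This uses \(R>0\), which holds by the choice of \(C_0\). Hence \(\tfrac{\dd}{\dd t}\bigl(r-R(\bm X)\bigr)=0\). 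Together with \(r(0)=R(\bm X^0)\), this yields \(r\equiv R(\bm X)\).

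\emph{Step 2: equivalence.} With \(r=R(\bm X)\), the interface term satisfies \(rL_{\bm X}[\bm v\circ\bm X]=D\F(\bm X)[\bm v\circ\bm X]\). By \eqref{eq:geometric-first-variation}, this is exactly the work of the shifted traction \eqref{eq:shifted-stress-jump}. Next we rewrite the inertial terms. Formula \eqref{eq:half-density-derivative} expresses \(\mathcal D_t^\bullet\bm u\), and the skew identity \eqref{eq:continuous-skew-equivalence} converts the convective terms, so the inertial terms become \((\rho\{\partial_t\bm u+(\bm u\cdot\nabla)\bm u\},\bm v)\). The momentum equation \eqref{eq:continuous-sav-momentum} is therefore the standard phasewise weak form of the shifted model. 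Phasewise integration by parts then recovers the strong equations and the jump condition in the usual way. The remaining equations encode \(\bm X_t=\bm u\circ\bm X\), the harmonic extension \eqref{eq:continuous-harmonic}, and the flow map \eqref{eq:continuous-ale-flow-map}, since \(\bm\eta\) and \(\bm\psi\) range over dense \(L^2\) spaces. For the converse, take a regular solution of the augmented model and set \(r=R(\bm X)\). The fourth equation then holds by the chain rule, and reversing the integrations by parts recovers the weak momentum equation.

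\emph{Step 3: energy law.} Take \(\bm v=\bm u\) and \(q=p\). The skew term vanishes, and \((p,\Div\bm u)=0\). The kinetic chain rule \eqref{eq:continuous-kinetic-chain} turns the first term into \(\tfrac12\tfrac{\dd}{\dd t}\int_\Om\rho|\bm u|^2\). For the interface term we use \(\bm u\circ\bm X=\bm X_t\), which follows from the third equation (the integrand is regular, so the identity holds pointwise). Then \(rL_{\bm X}[\bm u\circ\bm X]=rL_{\bm X}[\bm X_t]=2rr_t=\tfrac{\dd}{\dd t}r^2\). Adding the viscous term \(2\int\mu|\bm D(\bm u)|^2\) gives \eqref{eq:continuous-modified-law}. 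Finally, since \(r^2-C_0=\F(\bm X)=\gamma|\Gam|+G\), the bracketed energy equals \(E(t)\). Because the dissipation term is nonnegative, \(E(t)\) is non-increasing.

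The main obstacle is the kinetic chain rule. Its justification must use \(\rho\circ\bm\phi=\rho^0\), the Jacobian form \(J^{-1/2}\partial_t(J^{1/2}\bm u\circ\bm\phi)\), and a change of variables to \(\Om^0\). We treat it as the stated consequence of phase preservation, so it is taken as given. The equivalence step also needs some care in matching the sign conventions of \(\bm\nu\) and \(\kappa\) to the traction jump.
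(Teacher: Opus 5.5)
Your proposal is correct and follows essentially the same route as the paper's proof. Both use the ODE and chain-rule argument to get \(r=R(\bm X)\), then test with \((\bm u,p)\) and use the kinematic and scalar equations to turn the interfacial work into \(\frac{\dd}{\dd t}r^2\). Your Step 2 simply gives more detail on the equivalence, namely the rewriting of the inertial terms through the half-density derivative and the skew identity, where the paper only says that the same identities give the converse.
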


\begin{proof}
The scalar equation and the chain rule give
\begin{equation}\label{eq:continuous-sav-consistency}
\begin{aligned}
 \frac{\dd}{\dd t}\bigl(r(t)-R(\bm X(t))\bigr)
 &=r_t-\frac{D\F(\bm X)[\bm X_t]}{2R(\bm X)}=\frac12L_{\bm X}[\bm X_t]
  -\frac12L_{\bm X}[\bm X_t]=0,\\
 r(0)-R(\bm X^0)&=0.
\end{aligned}
\end{equation}
Hence, for every interface variation \(\bm\eta\),
\begin{equation}\label{eq:continuous-sav-identities}
\begin{aligned}
 r(t)=R(\bm X(t)), \qquad \frac{r(t)}{R(\bm X(t))}=1, \qquad  r(t)L_{\bm X}[\bm\eta]&=D\F(\bm X)[\bm\eta].
\end{aligned}
\end{equation}
The same identities give the converse upon setting \(r=R(\bm X)\) in
the shifted sharp-interface model.

Test \eqref{eq:continuous-sav-system} with \(\bm v=\bm u\) and \(q=p\).
The skew terms cancel, and \((\Div\bm u,p)_\Om=0\). By
\eqref{eq:continuous-kinetic-chain},
\begin{equation}\label{eq:continuous-momentum-balance}
\begin{aligned}
 \frac12\frac{\dd}{\dd t}(\rho\bm u,\bm u)_\Om
 +2(\mu\bm D(\bm u),\bm D(\bm u))_\Om +rL_{\bm X}[\bm u\circ\bm X]=0.
\end{aligned}
\end{equation}
The kinematic and scalar equations yield
\begin{align}
 rL_{\bm X}[\bm u\circ\bm X]
 &=rL_{\bm X}[\bm X_t] =2rr_t=\frac{\dd}{\dd t}r^2,
 \label{eq:continuous-sav-work}\\
 r^2-C_0
 &=\F(\bm X)
 =\gamma\abs{\Gam[\bm X]}+G(\bm X).
 \label{eq:continuous-geometric-identity}
\end{align}
Substituting
\eqref{eq:continuous-sav-work} and
\eqref{eq:continuous-geometric-identity} into
\eqref{eq:continuous-momentum-balance} proves
\eqref{eq:continuous-modified-law}.
\end{proof}

\section{The fully discrete ALE--SAV method}\label{sec:fully-discrete}

\subsection{Evolving isoparametric finite element spaces}

Fix an integer \(k\ge1\).  Let \(\K_h^0\) be a shape-regular and
quasi-uniform fitted triangulation
of the initial phase partition, with interface \(\Gam_h^0\).  Let
$\x^0=(\bm\xi_1,\ldots,\bm\xi_M)\in\R^{dM}$
collect all degree-\(k\) geometry nodes of \(\K_h^0\).  A nodal vector
\(\x=(\bm x_1,\ldots,\bm x_M)\) determines the fitted triangulation
\(\K_h[\x]\), its phase subdomains \(\Om_{h,\pm}[\x]\), and its interface
\(\Gam_h[\x]\).  Every element is a degree-\(k\) isoparametric image of a
reference simplex \(\widehat K\):
\begin{equation}\label{eq:isoparametric-element}
 K[\x]=F_K[\x](\widehat K),\qquad
 F_K[\x]\in[\mathbb P_k(\widehat K)]^d,\qquad
 \det DF_K[\x](\widehat x)>0
 \quad\forall\widehat x\in\widehat K.
\end{equation}
The element maps agree on common faces.  Interface faces are curved
isoparametric faces and approximate a smooth interface with geometric error
of order \(O(h^{k+1})\); see
\cite{ElliottRanner2021,Lenoir-1986}.  The fixed outer boundary is represented
exactly and consists of planar faces.  We write
\[
 \Om_h[\x]=\Om_{h,-}[\x]\cup\Gam_h[\x]\cup\Om_{h,+}[\x].
\]
An admissible configuration satisfies \eqref{eq:isoparametric-element},
preserves the phase labels, fixes \(\Gam_1\) pointwise, and maps
\(\Gam_2\) onto itself.

Define the mapped scalar space
\begin{equation}\label{eq:mapped-scalar-space}
 S_h^k[\x]
 :=\left\{v_h\in C^0(\overline\Om):
 v_h\circ F_K[\x]\in\mathbb P_k(\widehat K)
 \quad\forall K[\x]\in\K_h[\x]\right\}.
\end{equation}
Here \(S_h^{k+1}[\x]\) is defined analogously, with \(k\) replaced by
\(k+1\).  The fluid velocity and phasewise pressure spaces are
\begin{align}
 X_h[\x]
 &=[S_h^{k+1}[\x]]^d\cap\mathbf V,
 \label{eq:high-order-velocity-space}\\
 M_h[\x]
 &=\left(
 S_h^k[\x]|_{\Om_{h,+}[\x]}
 \times S_h^k[\x]|_{\Om_{h,-}[\x]}
 \right)\cap L_0^2(\Om).
 \label{eq:high-order-pressure-space}
\end{align}
Thus the pressure may jump across the fitted interface.  Moreover,
\begin{equation}\label{eq:shifted-pressure-inclusion}
 x_d\circ F_K[\x]=(F_K[\x])_d\in\mathbb P_k(\widehat K),
\end{equation}
so the pressure shift \(\rho g x_d\), after subtraction of its global
mean, belongs to the phasewise pressure space.  The geometry and
mesh-velocity space, its homogeneous-interface subspace, and the scalar
interface trace space are
\begin{align}
 W_h[\x]
 &=\left\{\bm z_h\in[S_h^k[\x]]^d:
 \bm z_h=\bm0\text{ on }\Gam_1,\quad
 \bm z_h\cdot\bm n=0\text{ on }\Gam_2\right\},
 \label{eq:high-order-mesh-space}\\
 \mathring W_h[\x]
 &=\left\{\bm z_h\in W_h[\x]:
 \bm z_h=\bm0\text{ on }\Gam_h[\x]\right\},\\
 \mathcal S_h[\x]
 &=\operatorname{Tr}_{\Gam_h[\x]}S_h^k[\x].
 \label{eq:high-order-interface-space}
\end{align}
It is known that the Taylor--Hood pair satisfies the discrete
inf--sup estimate on uniformly regular fitted mesh families:
\begin{equation}\label{eq:discrete-inf-sup}
 \|q_h\|_{L^2(\Om)}
 \le C\sup_{\bm0\ne\bm v_h\in X_h[\x]}
 \frac{|(q_h,\Div\bm v_h)_\Om|}{\|\bm v_h\|_{H^1(\Om)}}
 \qquad\forall q_h\in M_h[\x],
\end{equation}
where \(C\) is independent of \(h\) within such a family.

Let \(\x(t)=(\bm x_1(t),\ldots,\bm x_M(t))\), with \(\x(0)=\x^0\),
denote the evolving nodal configuration.  For the fully discrete method,
let \(t_n=n\tau\) and
\(t_{n+\frac12}=t_n+\tau/2\), and write
\[
 \begin{aligned}
  \x^n&=\x(t_n),\qquad
  \bm x_j^n=\bm x_j(t_n),\qquad
  \K_h^n=\K_h[\x^n],\\
  \Om_h^n&=\Om_h[\x^n],\qquad
  \Om_{h,\pm}^n=\Om_{h,\pm}[\x^n],\qquad
  \Gam_h^n=\Gam_h[\x^n].
 \end{aligned}
\]
Let \(\rho_h^n|_{\Om_{h,\pm}^n}=\rho_\pm\) and
\(\mu_h^n|_{\Om_{h,\pm}^n}=\mu_\pm\).
The notation at other time levels is analogous.

\subsection{Energy-stable ALE transfer operator}
Fix two mesh times \(s\) and \(t\) in one fixed-connectivity interval.  Write
\(\Om_h(r)=\Om_h[\x(r)]\) and
\(\rho_h(r)|_{\Om_{h,\pm}[\x(r)]}=\rho_\pm\) for \(r=s,t\).
The relative ALE map \(\bm\phi_{h,s\to t}\) takes
\(F_K[\x(s)](\widehat x)\) to \(F_K[\x(t)](\widehat x)\) on corresponding
elements and preserves their phase labels.  Its Jacobian is
\begin{equation}\label{eq:relative-ale-jacobian}
 J_{s\to t}(F_K[\x(s)](\widehat x))
 :=\det D\bm\phi_{h,s\to t}(F_K[\x(s)](\widehat x))
 =\frac{\det DF_K[\x(t)](\widehat x)}
        {\det DF_K[\x(s)](\widehat x)}>0,
 \qquad \widehat x\in\widehat K.
\end{equation}
Ordinary ALE composition changes the density-weighted kinetic norm through
this Jacobian.  We therefore define the half-density composition by
\begin{equation}\label{eq:half-density-composition}
 (\C_{s\to t}\bm v)(F_K[\x(t)](\widehat x))
 =J_{s\to t}(F_K[\x(s)](\widehat x))^{-1/2}
   \bm v(F_K[\x(s)](\widehat x)),
 \qquad \widehat x\in\widehat K.
\end{equation}
The field \(\C_{s\to t}\bm v_h\) need not belong to the target space
\(X_h[\x(t)]\).  Let
\(\Pi_h(t):L^2(\Om_h(t))^d\to X_h[\x(t)]\) be the density-weighted
orthogonal projection, defined by
\[
 (\rho_h(t)\Pi_h(t)\bm z,\vh)_{\Om_h(t)}
 =(\rho_h(t)\bm z,\vh)_{\Om_h(t)}
 \qquad\forall\vh\in X_h[\x(t)].
\]
We then define the ALE transfer operator by
\begin{equation}\label{eq:projected-transfer}
 \mathcal T_{s\to t}=\Pi_h(t)\C_{s\to t}.
\end{equation}
The mesh correspondence and the associated velocity transfer are illustrated
in \Cref{fig:ale-transfer}.
\begin{figure}[t]
 \centering
 \includegraphics[width=0.86\textwidth]{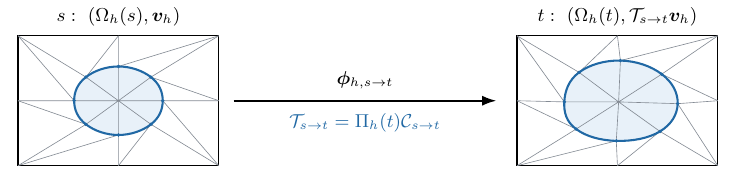}
 \caption{ALE map and velocity transfer.}
 \label{fig:ale-transfer}
\end{figure}

\begin{lemma}[Energy stability of the ALE transfer]
\label{lem:ale-transfer-stability}
For every \(\bm v_h\in X_h[\x(s)]\),
\begin{equation}\label{eq:ale-transfer-stability}
 \norm{\sqrt{\rho_h(t)}\,\mathcal T_{s\to t}\bm v_h}_{\Om_h(t)}^2
 \le \norm{\sqrt{\rho_h(s)}\,\bm v_h}_{\Om_h(s)}^2.
\end{equation}
\end{lemma}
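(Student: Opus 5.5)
The proof splits \(\mathcal T_{s\to t}=\Pi_h(t)\C_{s\to t}\) into its two factors. We show that the half-density composition \(\C_{s\to t}\) is an isometry for the density-weighted \(L^2\) norms. We then show that the density-weighted orthogonal projection \(\Pi_h(t)\) does not increase that norm. Chaining the two gives \eqref{eq:ale-transfer-stability}.

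\emph{Step 1 (isometry of \(\C_{s\to t}\)).} Take \(\bm v_h\in X_h[\x(s)]\) and set \(\bm z=\C_{s\to t}\bm v_h\in L^2(\Om_h(t))^d\). We write the norm as a sum over elements, \(\norm{\sqrt{\rho_h(t)}\bm z}_{\Om_h(t)}^2=\sum_K\int_{K[\x(t)]}\rho_h(t)\abs{\bm z}^2\dd x\). On each element we pull back to \(\widehat K\) through \(F_K[\x(t)]\), which produces the factor \(\det DF_K[\x(t)](\widehat x)\); this factor is positive by \eqref{eq:isoparametric-element}.

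By \eqref{eq:half-density-composition}, \(\abs{\bm z\circ F_K[\x(t)]}^2=J_{s\to t}^{-1}\abs{\bm v_h\circ F_K[\x(s)]}^2\). Inserting \eqref{eq:relative-ale-jacobian}, the factor \(\det DF_K[\x(t)]\) cancels and leaves \(\det DF_K[\x(s)]\). Phase labels are preserved, so \(\rho_h(t)\circ F_K[\x(t)]=\rho_h(s)\circ F_K[\x(s)]\), both equal to the constant \(\rho_\pm\) on \(K\). Pushing forward through \(F_K[\x(s)]\) then gives \(\int_{K[\x(s)]}\rho_h(s)\abs{\bm v_h}^2\dd x\). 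Summing over \(K\) yields equality of the two weighted norms.

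A small point to note: \(\bm z\) is only piecewise defined and may be discontinuous across faces. It is nevertheless in \(L^2\), and the element-by-element computation is all that is needed, since faces have measure zero.

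\emph{Step 2 (contraction of \(\Pi_h(t)\)).} The weight \(\rho_h(t)\) is bounded below by \(\min\rho_\pm>0\), so \((\rho_h(t)\cdot,\cdot)_{\Om_h(t)}\) is an inner product equivalent to the \(L^2\) one. By its defining relation, \(\Pi_h(t)\) is the orthogonal projection onto the closed, finite-dimensional subspace \(X_h[\x(t)]\) with respect to this inner product. Testing the definition with \(\vh=\Pi_h(t)\bm z\) gives
\[
\norm{\sqrt{\rho_h(t)}\Pi_h(t)\bm z}^2=(\rho_h(t)\bm z,\Pi_h(t)\bm z)\le\norm{\sqrt{\rho_h(t)}\bm z}\,\norm{\sqrt{\rho_h(t)}\Pi_h(t)\bm z}
\]
by Cauchy--Schwarz in the weighted inner product. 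Dividing through (the case \(\Pi_h(t)\bm z=\bm0\) being trivial) gives \(\norm{\sqrt{\rho_h(t)}\Pi_h(t)\bm z}\le\norm{\sqrt{\rho_h(t)}\bm z}\). Combining this with Step 1 proves the lemma.

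The only delicate step is Step 1: the Jacobian exponent \(-1/2\) must cancel the change-of-variables factor exactly, and the density must match under the map. Both follow directly from \eqref{eq:relative-ale-jacobian} and from phase-label preservation within a fixed-connectivity interval.
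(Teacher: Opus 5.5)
Your proposal is correct and follows the paper's proof. Both arguments show, by an elementwise change of variables, that the half-density composition $\C_{s\to t}$ is an isometry in the density-weighted norm. You route this through the reference element $\widehat K$, while the paper changes variables directly via $\bm\phi_{h,s\to t}$, which is the same computation. Both then show that $\Pi_h(t)$ is nonexpansive by testing its defining relation with $\Pi_h(t)\bm z$ and applying Cauchy--Schwarz.
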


\begin{proof}
Because the ALE map preserves the phase labels, corresponding elements have
the same phase density, denoted by \(\rho_K\).  An elementwise change of
variables shows that the half-density composition is an isometry:
\begin{align*}
 \norm{\sqrt{\rho_h(t)}\,\C_{s\to t}\bm v_h}_{\Om_h(t)}^2
 &=\sum_K\int_{K[\x(s)]}\rho_K
   J_{s\to t}^{-1}\abs{\bm v_h}^2
   J_{s\to t}\dd x\\
 &=\sum_K\int_{K[\x(s)]}\rho_K\abs{\bm v_h}^2\dd x
 =\norm{\sqrt{\rho_h(s)}\,\bm v_h}_{\Om_h(s)}^2.
\end{align*}
Orthogonality and Cauchy--Schwarz show that the weighted projection is
nonexpansive:
\[
 \norm{\sqrt{\rho_h(t)}\,\Pi_h(t)\bm z}_{\Om_h(t)}^2
 =(\rho_h(t)\bm z,\Pi_h(t)\bm z)_{\Om_h(t)}
 \le\norm{\sqrt{\rho_h(t)}\,\bm z}_{\Om_h(t)}
       \norm{\sqrt{\rho_h(t)}\,\Pi_h(t)\bm z}_{\Om_h(t)}.
\]
Taking \(\bm z=\C_{s\to t}\bm v_h\) and combining the two estimates
yields
\[
 \norm{\sqrt{\rho_h(t)}\,\mathcal T_{s\to t}\bm v_h}_{\Om_h(t)}^2
 \le\norm{\sqrt{\rho_h(t)}\,\C_{s\to t}\bm v_h}_{\Om_h(t)}^2
 =\norm{\sqrt{\rho_h(s)}\,\bm v_h}_{\Om_h(s)}^2.
\]
The proof is complete.
\end{proof}

\subsection{Fully discrete Crank--Nicolson scheme}

At \(t_n\), the trace projection
\(\Rop_h^n:X_h[\x^n]\to(\mathcal S_h[\x^n])^d\) is defined by
\begin{equation}\label{eq:trace-projection}
 (\Rop_h^n\bm v,\bm\eta)_{\Gam_h^n}
 =(\bm v,\bm\eta)_{\Gam_h^n}
 \qquad\forall\bm\eta\in(\mathcal S_h[\x^n])^d.
\end{equation}
The projection at the other time levels is defined analogously.

The corresponding forms are
\begin{subequations}\label{eq:configuration-forms}
\begin{align}
 a_h^n(\bm v,\bm z)
 &=2(\mu_h^n\bm D(\bm v),\bm D(\bm z))_{\Om_h^n},\\
 b_h^n(\bm v,q)&=-(q,\Div\bm v)_{\Om_h^n},\\
 c_h^n(\bm\beta;\bm v,\bm z)
 &=\frac12\bigl(\rho_h^n(\bm\beta\cdot\nabla)\bm v,\bm z\bigr)_{\Om_h^n}
  -\frac12\bigl(\rho_h^n(\bm\beta\cdot\nabla)\bm z,\bm v\bigr)_{\Om_h^n}.
\end{align}
\end{subequations}
The forms at the other time levels are defined analogously.

For a nondegenerate interface map \(\bm X_h\), define the
discrete total geometric energy by
\begin{align}
 \F_h(\bm X_h)
 &=\gamma\abs{\Gam_h[\bm X_h]}+G_h(\bm X_h),
 \label{eq:discrete-geometric-energy}\\
 G_h(\bm X_h)
 &=\int_\Om\rho_h(\bm X_h)g x_d\dd x,
 \qquad
r_h(\bm X_h)=\sqrt{\F_h(\bm X_h)+C_0}.
 \label{eq:discrete-potential-energy}
\end{align}
The surface term and its variation use the same surface
quadrature.  The volume quadrature for \(G_h\) is exact for
\((F_K)_d\det DF_K\) on every reference element; degree of exactness
\((d+1)k-d\) suffices.  Thus \(G_h\) is the exact phasewise potential
and depends only on the interface.
The area variation, assembled with the chosen surface quadrature, is
\begin{align}
 D\{\gamma\abs{\Gam_h}\}[\bm\eta_h]
 &=\gamma\int_{\Gam_h}
   \bm P_h:\nabla_s\bm\eta_h\dd s,
 \qquad
 \bm P_h=\bm I-\bm\nu_h\otimes\bm\nu_h,
 \label{eq:discrete-surface-variation}
\end{align}
With fixed material labels and zero normal displacement on the outer boundary,
the potential variation is
\begin{align}
 DG_h(\bm X_h)[\bm\eta_h]
 &=(\rho_--\rho_+)\int_{\Gam_h}
   g x_d\bm\eta_h\cdot\bm\nu_h\dd s.
 \label{eq:discrete-potential-variation}
\end{align}
Here \(\bm\nu_h\) points from \(\Om_{h,-}\) to \(\Om_{h,+}\).
The surface quadrature is also exact for the gravitational
integrand in \eqref{eq:discrete-potential-variation}.  The trace projection
and kinematic equation use the same surface \(L^2\) product.

The interface maps \(\Xh^n:\Gam_h^0\to\Gam_h^n\) use common material
labels at all time levels.  
Given the initial fitted interface and velocity, initialize
\begin{equation}\label{eq:discrete-sav-initialization}
 r^0=r_h(\Xh^0).
\end{equation}
The scalar update need not preserve \(r^n=r_h(\Xh^n)\) at later time levels.

\paragraph{Step 1: Construct the midpoint mesh from known data.}
Predict the midpoint interface by
\begin{equation}\label{eq:discrete-carrier}
 \Xh^{n+\frac12}(\zeta)=\Xh^n(\zeta)
 +\frac\tau2(\Rop_h^n\uh^n)(\Xh^n(\zeta)),
 \qquad \zeta\in\Gam_h^0.
\end{equation}
Compute the current harmonic mesh velocity
\(\bm w_h^n\in W_h[\x^n]\) and update all degree-\(k\) geometry nodes:
\begin{subequations}\label{eq:discrete-carrier-flow-map}
\begin{align}
 \bm w_h^n|_{\Gam_h^n}&=\Rop_h^n\uh^n,\\
 \sum_{\pm}\int_{\Om_{h,\pm}^n}
 \nabla\bm w_h^n:\nabla\bm z_h\dd x&=0
 &&\forall\bm z_h\in\mathring W_h[\x^n],\\
 \bm x_j^{n+\frac12}&=\bm x_j^n
       +\frac\tau2\bm w_h^n(\bm x_j^n),\\
 \bm\phi_h^{n+\frac12}(\bm x_j^0)&=\bm x_j^{n+\frac12}.
\end{align}
\end{subequations}
The interface nodes satisfy \eqref{eq:discrete-carrier}.
On the midpoint mesh, set
\begin{equation}\label{eq:discrete-surface-functional}
 L_h^{n+\frac12}[\bm\eta_h]
 =\frac{D\F_h(\Xh^{n+\frac12})[\bm\eta_h]}
 {r_h(\Xh^{n+\frac12})}.
\end{equation}
Set
\begin{equation}\label{eq:incoming-transfer}
 \widehat{\bm u}_h^{\,n}
 =\mathcal T_{t_n\to t_{n+\frac12}}\uh^n.
\end{equation}
The frozen advector is initialized from \(\uh^0\) and subsequently
extrapolated from two time levels:
\begin{equation}\label{eq:discrete-advector}
 \bm u_{\mathrm{adv},h}^{n+\frac12}
 =\begin{cases}
 \mathcal T_{t_0\to t_{\frac12}}\uh^0,
   & n=0,\\[2mm]
 \dfrac32\mathcal T_{t_n\to t_{n+\frac12}}\uh^n
 -\dfrac12\mathcal T_{t_{n-1}\to t_{n+\frac12}}\uh^{n-1},
   & n\ge1.
 \end{cases}
\end{equation}
The first step thus requires no velocity at \(t_{-1}\) and uses the same
coupled solve and endpoint update as subsequent steps.  For smooth solutions,
this one-time startup gives an \(O(\tau^2)\) starting error and does not reduce
the formal second-order temporal accuracy.
The frozen convective harmonic velocity
\(\wh^{n+\frac12}
\in W_h[\x^{n+\frac12}]\) satisfies
\begin{subequations}\label{eq:discrete-mesh-velocity}
\begin{align}
 \wh^{n+\frac12}|_{\Gam_h^{n+\frac12}}
 &=\Rop_h^{n+\frac12}
   \bm u_{\mathrm{adv},h}^{n+\frac12},\\
 \sum_{\pm}\int_{\Om_{h,\pm}^{n+\frac12}}
 \nabla\wh^{n+\frac12}:\nabla\bm z_h\dd x&=0
 &&\forall\bm z_h\in\mathring W_h[\x^{n+\frac12}],\\
 \betah^{n+\frac12}
 &=\bm u_{\mathrm{adv},h}^{n+\frac12}
   -\wh^{n+\frac12}.
\end{align}
\end{subequations}
The predictor \(\bm w_h^n\) constructs the midpoint mesh, whereas
\(\wh^{n+\frac12}\) enters only the frozen convective coefficient
\(\betah^{n+\frac12}\).

\paragraph{Step 2: Solve the coupled system on the midpoint mesh.}
The unknowns are
\[
 (\widehat{\bm u}_h^{\,n+1},p_h^{n+1/2},\Xh^{n+1},r^{n+1})
 \in X_h[\x^{n+\frac12}]
 \times M_h[\x^{n+\frac12}]
 \times(\mathcal S_h[\x^{n+\frac12}])^d\times\R.
\]
Here \(p_h^{n+1/2}\) approximates the shifted pressure defined in
\Cref{sec:continuous-sav}.
Write
\begin{equation}\label{eq:discrete-midpoints}
 \bm u_h^{n+1/2}
 =\frac12(\widehat{\bm u}_h^{\,n+1}+\widehat{\bm u}_h^{\,n}),
 \qquad
 r^{\,n+1/2}=\frac12(r^{n+1}+r^n).
\end{equation}
The first three equations below hold for every test triple
\[
 (\vh,q_h,\etah)\in
 X_h[\x^{n+\frac12}]
 \times M_h[\x^{n+\frac12}]
 \times(\mathcal S_h[\x^{n+\frac12}])^d.
\]
\begin{subequations}\label{eq:fully-discrete-scheme}
\begin{align}
 \left(\rho_h^{n+\frac12}
 \frac{\widehat{\bm u}_h^{\,n+1}-\widehat{\bm u}_h^{\,n}}{\tau},\vh\right)_{\Om_h^{n+\frac12}}
 &+c_h^{n+\frac12}
   (\betah^{n+\frac12};\bm u_h^{n+1/2},\vh)
 +a_h^{n+\frac12}(\bm u_h^{n+1/2},\vh)
 \notag\\
 &+b_h^{n+\frac12}(\vh,p_h^{n+1/2})
 +r^{\,n+1/2}L_h^{n+\frac12}
  [\Rop_h^{n+\frac12}\vh]
=0,
 \label{eq:discrete-momentum}\\
 b_h^{n+\frac12}(\bm u_h^{n+1/2},q_h)&=0,
 \label{eq:discrete-divergence}\\
 \left(\frac{\Xh^{n+1}-\Xh^n}{\tau},\etah\right)_{\Gam_h^{n+\frac12}}
 &=(\Rop_h^{n+\frac12}\bm u_h^{n+1/2},\etah)_{\Gam_h^{n+\frac12}},
 \label{eq:discrete-kinematic}\\
 r^{n+1}-r^n
 &=\frac12L_h^{n+\frac12}[\Xh^{n+1}-\Xh^n],
 \label{eq:discrete-sav}
\end{align}
\end{subequations}
The right-hand side is zero because gravity is contained in
\(\F_h\) and \(L_h^{n+\frac12}\).

\paragraph{Step 3: Update the mesh and velocity to \(t_{n+1}\).}
After solving
\eqref{eq:fully-discrete-scheme}, compute the harmonic update velocity
\(\widetilde{\bm w}_h^{n+\frac12}
\in W_h[\x^{n+\frac12}]\):
\begin{subequations}\label{eq:discrete-update-flow-map}
\begin{align}
 \widetilde{\bm w}_h^{n+\frac12}
 |_{\Gam_h^{n+\frac12}}
 &=\Rop_h^{n+\frac12}\bm u_h^{n+1/2},\\
 \sum_{\pm}\int_{\Om_{h,\pm}^{n+\frac12}}
 \nabla\widetilde{\bm w}_h^{n+\frac12}:\nabla\bm z_h\dd x&=0
 &&\forall\bm z_h\in\mathring W_h[\x^{n+\frac12}],\\
 \bm x_j^{n+1}
 &=\bm x_j^n+\tau\widetilde{\bm w}_h^{n+\frac12}
 (\bm x_j^{n+\frac12}),\\
 \bm\phi_h^{n+1}(\bm x_j^0)&=\bm x_j^{n+1}.
\end{align}
\end{subequations}
For interface geometry nodes, the trace condition for
\(\widetilde{\bm w}_h^{n+\frac12}\) and the nodal update reproduce
\eqref{eq:discrete-kinematic}; hence the discrete ALE flow map and
\(\Xh^{n+1}\) have the same endpoint interface.
After this recovery, define
\begin{equation}\label{eq:outgoing-transfer}
 \uh^{n+1}=\mathcal T_{t_{n+\frac12}\to t_{n+1}}
 \widehat{\bm u}_h^{\,n+1}.
\end{equation}

\begin{remark}
The trace projection need not give
\(\betah^{n+\frac12}\cdot\bm\nu_h=0\) pointwise on
\(\Gam_h^{n+\frac12}\).  Suppressing the time superscripts and writing
\(c\) for the exactly integrated form of \(c_h^{n+\frac12}\), phasewise
integration by parts gives
\begin{align}
 \begin{aligned}
 c(\bm\beta_h;\bm z_h,\bm v_h)
 ={}&\bigl(\rho_h\{(\bm\beta_h\cdot\nabla)\bm z_h
             -\tfrac12(\Div\bm w_h)\bm z_h\},\bm v_h\bigr)_{\Om_h}\\
 &+\frac12\bigl(\rho_h(\Div\bm u_{\mathrm{adv},h})\bm z_h,
                  \bm v_h\bigr)_{\Om_h}
 -\frac12(\rho_--\rho_+)\int_{\Gam_h}
   (\bm\beta_h\cdot\bm\nu_h)(\bm z_h\cdot\bm v_h)\dd s .
 \end{aligned}
\end{align}
The last two terms vanish when the advecting velocity is divergence-free
and its normal component agrees with the mesh velocity on the interface.
Quadrature affects this identity but not the skew symmetry
\(c_h^{n+\frac12}(\bm\beta_h;\bm z_h,\bm z_h)=0\) used in the energy proof.
\end{remark}
\subsection{Discrete energy decay}

\begin{theorem}[Discrete energy inequality]\label{thm:discrete-energy}
Define
\begin{equation}\label{eq:discrete-modified-energy}
 \M_h^n=\frac12\norm{\sqrt{\rho_h^n}\,\uh^n}_{\Om_h^n}^2
 +(r^n)^2-C_0.
\end{equation}
For every geometrically accepted time step, the solution of
\eqref{eq:fully-discrete-scheme} satisfies
\begin{align}
 \M_h^{n+1}-\M_h^n
 +2\tau\norm{\sqrt{\mu_h^{n+\frac12}}
       \bm D(\bm u_h^{n+1/2})}_{\Om_h^{n+\frac12}}^2\le 0.
 \label{eq:discrete-energy-law}
\end{align}
Consequently, \(\M_h^{n+1}\le\M_h^n\).
\end{theorem}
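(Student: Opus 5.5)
The plan is to mimic the continuous argument of \Cref{thm:continuous-sav} on the midpoint mesh. We test with the midpoint quantities and then use \Cref{lem:ale-transfer-stability} twice to move the kinetic energy between the endpoint meshes \(\Om_h^n\), \(\Om_h^{n+1}\) and the midpoint mesh \(\Om_h^{n+\frac12}\).

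First, take \(\vh=\bm u_h^{n+1/2}\in X_h[\x^{n+\frac12}]\) in \eqref{eq:discrete-momentum} and \(q_h=p_h^{n+1/2}\) in \eqref{eq:discrete-divergence}. Several terms then simplify:
\begin{itemize}
\item The pressure term \(b_h^{n+\frac12}(\bm u_h^{n+1/2},p_h^{n+1/2})\) vanishes.
\item The convective term \(c_h^{n+\frac12}(\betah^{n+\frac12};\bm u_h^{n+1/2},\bm u_h^{n+1/2})\) vanishes by its skew-symmetric definition. This holds with quadrature as well, as noted in the preceding remark.
\item The viscous term equals \(2\norm{\sqrt{\mu_h^{n+\frac12}}\bm D(\bm u_h^{n+1/2})}^2\).
\item Since \(\bm u_h^{n+1/2}\) is the average of \(\widehat{\bm u}_h^{\,n+1}\) and \(\widehat{\bm u}_h^{\,n}\), the time-difference term becomes
\(\frac1{2\tau}\bigl(\norm{\sqrt{\rho_h^{n+\frac12}}\widehat{\bm u}_h^{\,n+1}}^2-\norm{\sqrt{\rho_h^{n+\frac12}}\widehat{\bm u}_h^{\,n}}^2\bigr)\) on \(\Om_h^{n+\frac12}\).
\end{itemize}

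Second, treat the SAV term \(r^{n+1/2}L_h^{n+\frac12}[\Rop_h^{n+\frac12}\bm u_h^{n+1/2}]\). Since \(L_h^{n+\frac12}\) is linear, it suffices to identify \(\Rop_h^{n+\frac12}\bm u_h^{n+1/2}\) with \((\Xh^{n+1}-\Xh^n)/\tau\). The kinematic equation \eqref{eq:discrete-kinematic} gives this identity in the \(L^2(\Gam_h^{n+\frac12})\) sense, with test space \((\mathcal S_h[\x^{n+\frac12}])^d\). Both functions lie in this space: \(\Xh^{n+1}\) is an unknown in it, and \(\Xh^n\) is viewed there through the common material labels and nodal values. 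Hence the identity holds exactly.

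This identification is the step I expect to be the delicate one. One must check that \(\Xh^n\), transported by label to the midpoint mesh, is indeed an element of \(\mathcal S_h[\x^{n+\frac12}]^d\), which is isoparametric with the same reference parametrization. One must also check that \(L_h^{n+\frac12}\) is evaluated consistently on that space, which follows because the same surface quadrature is used throughout. Granting this, \eqref{eq:discrete-sav} yields
\[
\tau r^{n+1/2}L_h^{n+\frac12}[\Rop_h^{n+\frac12}\bm u_h^{n+1/2}]
= r^{n+1/2}\cdot 2(r^{n+1}-r^n)=(r^{n+1})^2-(r^n)^2 .
\]

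Third, multiply the tested momentum equation by \(\tau\) to get an exact identity on the midpoint mesh:
\[
\tfrac12\norm{\sqrt{\rho_h^{n+\frac12}}\widehat{\bm u}_h^{\,n+1}}^2+(r^{n+1})^2
-\tfrac12\norm{\sqrt{\rho_h^{n+\frac12}}\widehat{\bm u}_h^{\,n}}^2-(r^n)^2
+2\tau\norm{\sqrt{\mu_h^{n+\frac12}}\bm D(\bm u_h^{n+1/2})}^2=0 .
\]
Now apply \Cref{lem:ale-transfer-stability} twice. With \(s=t_{n+\frac12}\), \(t=t_{n+1}\) and \eqref{eq:outgoing-transfer}, the kinetic energy of \(\uh^{n+1}\) on \(\Om_h^{n+1}\) is at most that of \(\widehat{\bm u}_h^{\,n+1}\). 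With \(s=t_n\), \(t=t_{n+\frac12}\) and \eqref{eq:incoming-transfer}, the kinetic energy of \(\widehat{\bm u}_h^{\,n}\) is at most that of \(\uh^n\) on \(\Om_h^n\). Both applications require the two meshes to lie in one fixed-connectivity interval with admissible Jacobians, which is exactly what ``geometrically accepted time step'' supplies. Substituting both inequalities into the identity (the first bounds the new energy from above, the second bounds the old energy from below) gives \eqref{eq:discrete-energy-law}, and monotonicity follows since the viscous term is nonnegative.
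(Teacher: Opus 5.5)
Your proposal is correct and follows the paper's proof essentially step by step. You test \eqref{eq:discrete-momentum}--\eqref{eq:discrete-divergence} with the midpoint velocity and pressure, convert the capillary--gravity work into \((r^{n+1})^2-(r^n)^2\) via the kinematic equation and \eqref{eq:discrete-sav}, and close with \Cref{lem:ale-transfer-stability} applied to the incoming and outgoing transfers. The point you flag, namely \(\Xh^n\in(\mathcal S_h[\x^{n+\frac12}])^d\), is used implicitly in the paper's appeal to positive definiteness of the surface product; it holds because, pulled back by the midpoint element maps, \(\Xh^n\) is the degree-\(k\) polynomial \(F_K[\x^n]\) on each reference face.
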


\begin{proof}
By \eqref{eq:discrete-kinematic} and the positive definiteness of the
surface \(L^2\) product,
\[
 \frac{\Xh^{n+1}-\Xh^n}{\tau}
 =\Rop_h^{n+\frac12}\bm u_h^{n+1/2}
 \quad\text{in }(\mathcal S_h[\x^{n+\frac12}])^d.
\]
Hence \eqref{eq:discrete-sav} gives
\begin{align}
 r^{\,n+1/2}L_h^{n+\frac12}
 [\Rop_h^{n+\frac12}\bm u_h^{n+1/2}]
 &=\frac{r^{\,n+1/2}}{\tau}
   L_h^{n+\frac12}[\Xh^{n+1}-\Xh^n]\\
 &=\frac{2r^{\,n+1/2}}{\tau}(r^{n+1}-r^n)
 =\frac{(r^{n+1})^2-(r^n)^2}{\tau}. \notag
\end{align}
Take \(\vh=\bm u_h^{n+1/2}\) and \(q_h=p_h^{n+1/2}\).
Skew symmetry, incompressibility, and the midpoint identity yield
\begin{align*}
 c_h^{n+\frac12}
 (\betah^{n+\frac12};
 \bm u_h^{n+1/2},\bm u_h^{n+1/2})&=0,\\
 b_h^{n+\frac12}
 (\bm u_h^{n+1/2},p_h^{n+1/2})&=0,\\
 a_h^{n+\frac12}
 (\bm u_h^{n+1/2},\bm u_h^{n+1/2})
 &=2\norm{\sqrt{\mu_h^{n+\frac12}}
 \bm D(\bm u_h^{n+1/2})}_{\Om_h^{n+\frac12}}^2,\\
 \left(\rho_h^{n+\frac12}
       \frac{\widehat{\bm u}_h^{\,n+1}
                   -\widehat{\bm u}_h^{\,n}}{\tau},
       \bm u_h^{n+1/2}\right)_{\Om_h^{n+\frac12}}
 &=\frac{\norm{\sqrt{\rho_h^{n+\frac12}}\,
                 \widehat{\bm u}_h^{\,n+1}}_{\Om_h^{n+\frac12}}^2
          -\norm{\sqrt{\rho_h^{n+\frac12}}\,
                 \widehat{\bm u}_h^{\,n}}_{\Om_h^{n+\frac12}}^2}{2\tau}.
\end{align*}
Thus \eqref{eq:discrete-momentum} gives the carrier-mesh balance
\begin{align}
 &\frac12\left(
 \norm{\sqrt{\rho_h^{n+\frac12}}\,
       \widehat{\bm u}_h^{\,n+1}}_{\Om_h^{n+\frac12}}^2
 -\norm{\sqrt{\rho_h^{n+\frac12}}\,
       \widehat{\bm u}_h^{\,n}}_{\Om_h^{n+\frac12}}^2\right)
 +(r^{n+1})^2-(r^n)^2\\
 &\qquad+2\tau\norm{\sqrt{\mu_h^{n+\frac12}}
 \bm D(\bm u_h^{n+1/2})}_{\Om_h^{n+\frac12}}^2
 =0. \notag
\end{align}
Applying \Cref{lem:ale-transfer-stability} to the incoming and outgoing
transfers gives
\begin{align*}
 \norm{\sqrt{\rho_h^{n+\frac12}}\,
       \widehat{\bm u}_h^{\,n}}_{\Om_h^{n+\frac12}}^2
 &\le\norm{\sqrt{\rho_h^n}\,\uh^n}_{\Om_h^n}^2,\\
 \norm{\sqrt{\rho_h^{n+1}}\,\uh^{n+1}}_{\Om_h^{n+1}}^2
 &\le\norm{\sqrt{\rho_h^{n+\frac12}}\,
       \widehat{\bm u}_h^{\,n+1}}_{\Om_h^{n+\frac12}}^2.
\end{align*}
Combining these with the carrier-mesh balance gives
\begin{align*}
 \M_h^{n+1}-\M_h^n
 +2\tau\norm{\sqrt{\mu_h^{n+\frac12}}
 \bm D(\bm u_h^{n+1/2})}_{\Om_h^{n+\frac12}}^2
 &\le0.
\end{align*}
The proof is complete.
\end{proof}

\section{Numerical results in two dimensions}\label{sec:numerics}

All computations use the fully discrete scheme with \(C_0=1\),
\(r^0=r_h(\Xh^0)\), and no pressure penalty.  The capillary test uses
degree-\(k\) isoparametric elements, \(k=1,2,3\), with eighth-order
quadrature; the rising-bubble tests use \(k=1\) and fifth-order quadrature.

The physical discrete energy is
\begin{equation}\label{eq:discrete-physical-energy}
 \mathcal P_h^n
 =\frac12\norm{\sqrt{\rho_h^n}\,\uh^n}_{\Om_h^n}^2
 +\F_h(\Xh^n).
\end{equation}
Since the scalar update need not preserve \(r^n=r_h(\Xh^n)\), define
\begin{equation}\label{eq:sav-gap}
 d_h^n=(r^n)^2-C_0-\F_h(\Xh^n),
 \qquad
 \M_h^n-\mathcal P_h^n=d_h^n.
\end{equation}
Thus \Cref{thm:discrete-energy} controls \(\M_h^n\), whereas any monotonicity
of \(\mathcal P_h^n\) reported below is an observed numerical property.

The method is implemented in NGSolve, with the coupled systems solved
monolithically.  No remeshing is used in the refinement studies.  Reported
balance residuals refer to the carrier-mesh identity in the proof of
\Cref{thm:discrete-energy}.

For self-convergence, velocities are pulled back to the initial domain.  At
adjacent resolutions \(q>q'\), where \(q=\tau\) or the measured bulk mesh
size \(h\), we compute
\begin{align*}
 d_{u;q,q'}
 &=\norm{(\bm u_q(T)\circ\bm\phi_q(T))
          -(\bm u_{q'}(T)\circ\bm\phi_{q'}(T))}_{L^2(\Om^0)},\\
 d_{\Gamma;q,q'}
 &=\frac{\norm{\bm X_q(T)-\bm X_{q'}(T)}_{L^2(\Gam^0)}}
         {\norm{\bm X^0}_{L^2(\Gam^0)}}.
\end{align*}
For three resolutions \(q_0>q_1>q_2\), the order \(p\) is determined from
\[
 \frac{d_{q_0,q_1}}{d_{q_1,q_2}}
 =\frac{q_0^p-q_1^p}{q_1^p-q_2^p}.
\]
All differences are evaluated at common quadrature points and are
self-differences rather than errors against an exact solution.

\subsection{Force-free capillary relaxation}\label{sec:capillary}

The container is \([-1.25,1.25]\times[-1,1]\).  The initial interface is a
stationary ellipse with semiaxes \(0.62\) and \(0.38\).  There is no gravity,
the outer boundary is no slip, and
\[
 (\rho_+,\rho_-)=(1,0.8),\qquad
 (\mu_+,\mu_-)=(0.08,0.04),\qquad \gamma=1.
\]
For the degree comparison, \(h_{\max}=0.18\), the initial interface has 32
segments, and \(\tau=0.0025\).  The energy runs end at \(T=0.15\), while the
\(k=2\) solution in \Cref{fig:capillary-relaxation-flow} is continued to
\(T=0.2\).

\begin{figure}[htbp]
\centering
\includegraphics[width=\textwidth]{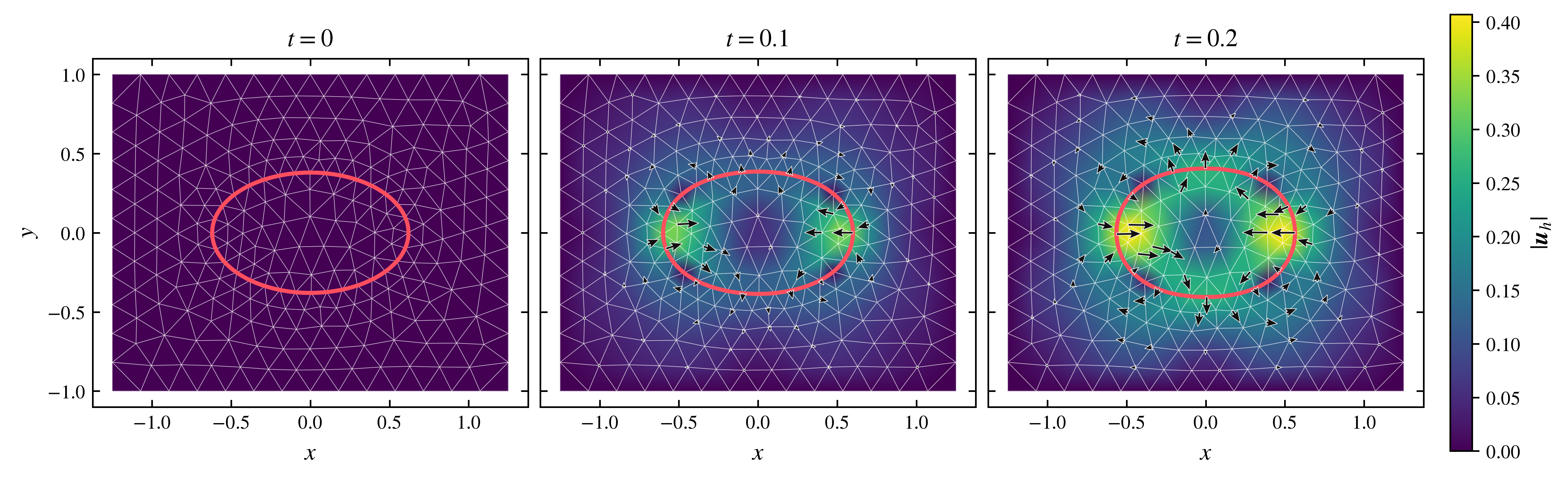}
\caption{Capillary relaxation for \(k=2\).}
\label{fig:capillary-relaxation-flow}
\end{figure}

For temporal refinement, \(T=0.02\), \(h_{\max}=0.30\), the initial interface
has 20 segments, and \(\tau=0.005\,2^{-j}\), \(j=0,\ldots,4\).  The velocity
and interface differences have order two for \(k=1,2,3\); see
\Cref{fig:capillary-temporal-velocity,fig:capillary-temporal-interface}.
For spatial refinement, \(T=0.01\) and \(\tau=10^{-4}\); the observed orders
are two, three, and four, respectively, as shown in
\Cref{fig:capillary-spatial-convergence,fig:capillary-spatial-interface}.
The modified energy decreases for each degree, with a maximum balance
residual below \(2.4\times10^{-14}\).  The physical energy also decreases in
this test, although this is not implied by \Cref{thm:discrete-energy}.

\begingroup
\makeatletter
\setlength{\@fptop}{0pt plus 1fil}
\setlength{\@fpsep}{14pt}
\setlength{\@fpbot}{0pt plus 1fil}
\makeatother
\captionsetup[subfigure]{skip=3pt}
\begin{figure}[htbp]
\centering
\begin{subfigure}[t]{0.475\textwidth}
  \centering
  \includegraphics[width=\linewidth]{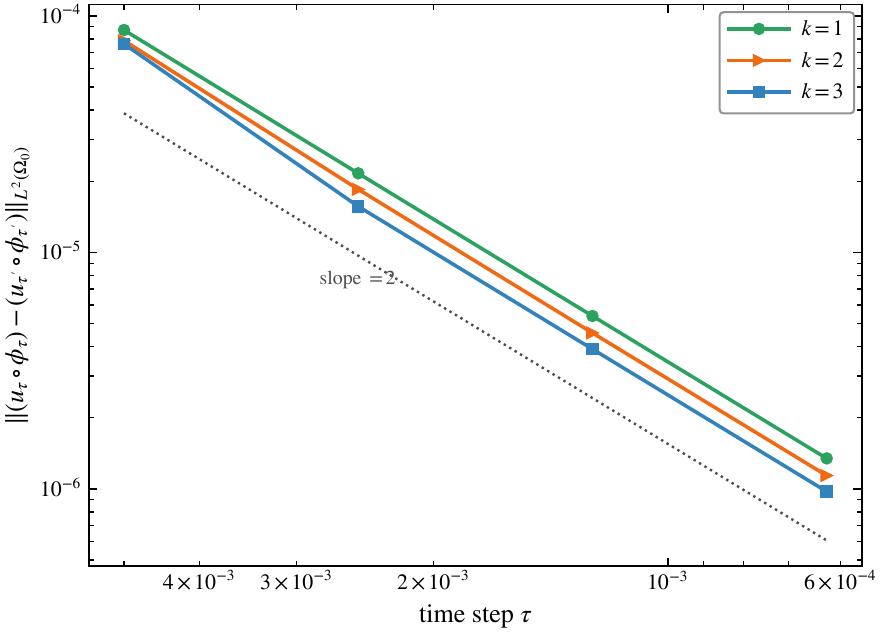}
  \caption{Temporal: velocity.}
  \label{fig:capillary-temporal-velocity}
\end{subfigure}\hfill
\begin{subfigure}[t]{0.475\textwidth}
  \centering
  \includegraphics[width=\linewidth]{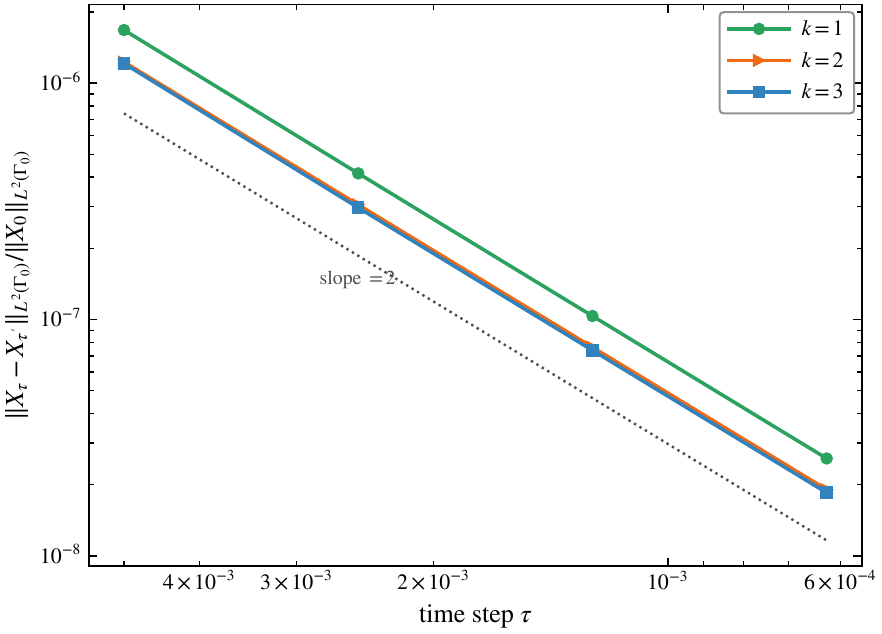}
  \caption{Temporal: interface.}
  \label{fig:capillary-temporal-interface}
\end{subfigure}
\par\vspace{0.4\baselineskip}
\begin{subfigure}[t]{0.475\textwidth}
  \centering
  \includegraphics[width=\linewidth,trim={0 0 431.16bp 0},clip]{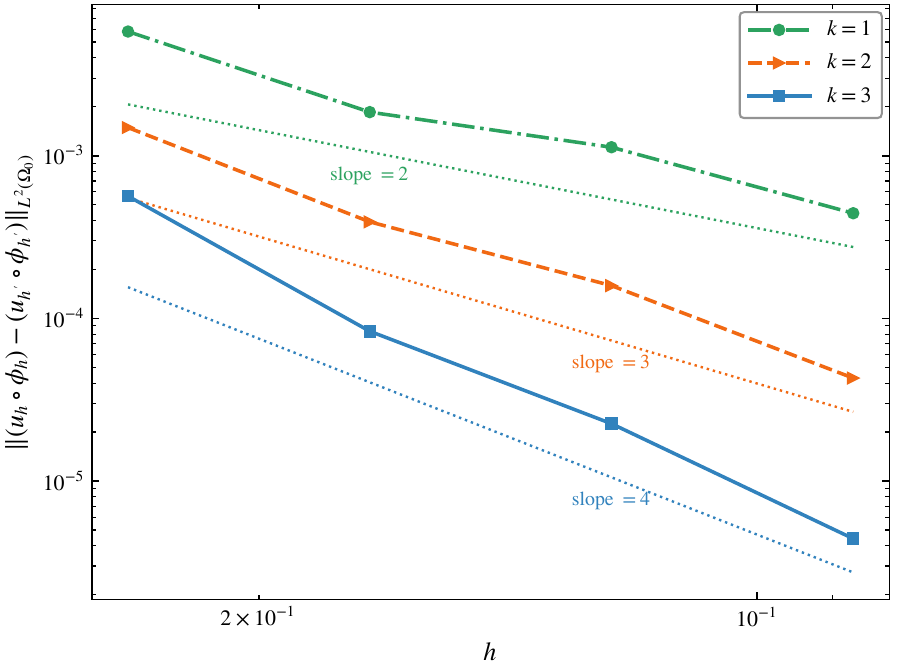}
  \caption{Spatial: velocity.}
  \label{fig:capillary-spatial-convergence}
\end{subfigure}\hfill
\begin{subfigure}[t]{0.475\textwidth}
  \centering
  \includegraphics[width=\linewidth,trim={431.16bp 0 0 0},clip]{spatial_errors_ratio14.pdf}
  \caption{Spatial: interface flow map.}
  \label{fig:capillary-spatial-interface}
\end{subfigure}
\caption{Temporal and spatial convergence.}
\label{fig:capillary-convergence}
\end{figure}
\begin{figure}[htbp]
\centering
\begin{subfigure}[t]{0.475\textwidth}
  \centering
  \includegraphics[width=\linewidth]{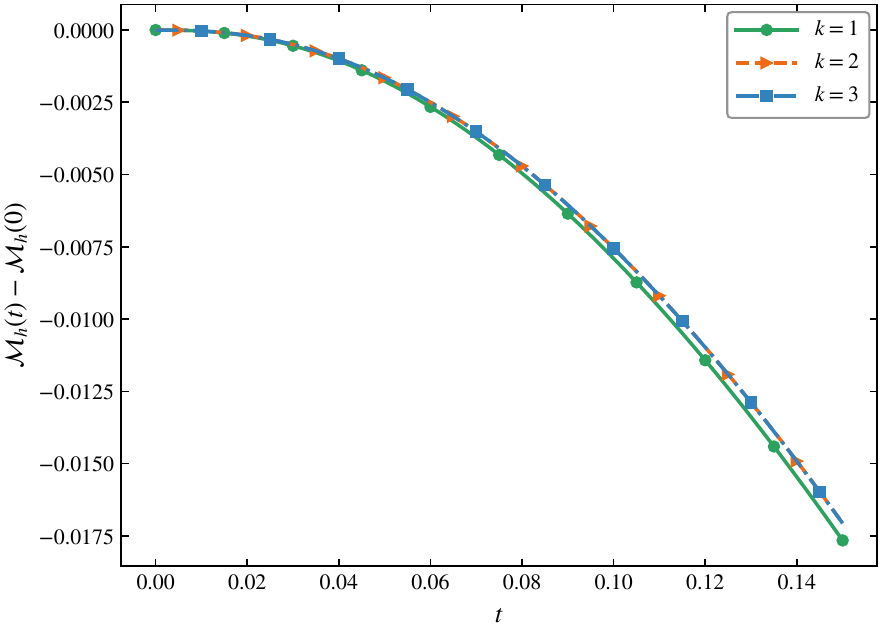}
  \caption{Modified energy \(\M_h(t)-\M_h(0)\).}
  \label{fig:capillary-modified-energy}
\end{subfigure}\hfill
\begin{subfigure}[t]{0.475\textwidth}
  \centering
  \includegraphics[width=\linewidth]{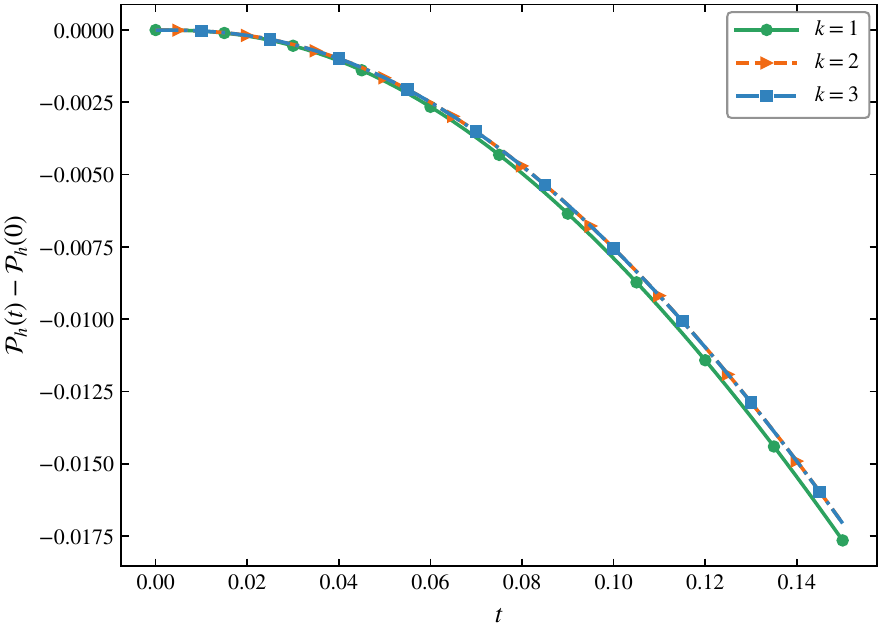}
  \caption{Physical energy \(\mathcal P_h(t)-\mathcal P_h(0)\).}
  \label{fig:capillary-physical-energy}
\end{subfigure}
\caption{Energy decay over time.}
\label{fig:capillary-energy-decay}
\end{figure}

\FloatBarrier  
\endgroup
\subsection{Benchmark problem 1}\label{sec:bubble}

We use the rising-bubble benchmarks of \citet{HysingEtAl2009} on
\(\Om=(0,1)\times(0,2)\), with an initial bubble of radius \(0.25\) centred at
\((0.5,0.5)\).  The horizontal walls are no slip, the vertical walls are free
slip, and \(g=0.98\).  The two parameter sets are listed in
\Cref{tab:bubble-parameters}; we first consider BP-1.

\begin{table}[htbp]
\centering
\caption{Rising-bubble parameters.}
\label{tab:bubble-parameters}
\begin{tabular}{cccccc}
\toprule
problem & \(\rho_+\) & \(\rho_-\) & \(\mu_+\) & \(\mu_-\) & \(\gamma\)\\
\midrule
BP-1 & 1000 & 100 & 10 & 1 & 24.5\\
BP-2 & 1000 & 1 & 10 & 0.1 & 1.96\\
\bottomrule
\end{tabular}
\end{table}

For the bubble area \(A_h(t)=\abs{\Om_{h,-}(t)}\), the benchmark quantities are
\begin{equation}\label{eq:bp1-benchmark-quantities}
 \operatorname{Cir}_h(t)
 =\frac{2\sqrt{\pi A_h(t)}}{\abs{\Gam_h(t)}},\qquad
 y_c(t)=\frac{1}{A_h(t)}\int_{\Om_{h,-}(t)}x_2\,\mathrm dx,
 \qquad
 V_c(t)=\frac{1}{A_h(t)}\int_{\Om_{h,-}(t)}(\uh)_2\,\mathrm dx.
\end{equation}
We also report the physical energy \(\mathcal P_h\) in
\eqref{eq:discrete-physical-energy}.

The initial fitted mesh has 2779 vertices, 5346 triangles, 53 interface edges,
and \(h_{\max}=0.0285\).  We take \(\tau=1/128,1/256,1/512\), \(T=2.5\), and
regenerate poor-quality meshes as in \citet{DuanLiYang2022}.

Temporal self-convergence is measured on \(0\le t\le T_0=0.5\), before the
first remeshing.  For \(q\in\{\operatorname{Cir}_h,y_c,V_c\}\), let \(q_\tau\)
be its discrete history and set \(t_i=i\tau/2\), \(i=1,\ldots,M\), with
\(M=2T_0/\tau\).  The relative differences are
\begin{align}
 \norm{e_q(\tau,\tau/2)}_{\ell^2}
 &:=\left(
 \frac{\displaystyle\sum_{i=1}^{M}
 \abs{(\mathcal I_{\tau\to\tau/2}q_\tau)(t_i)-q_{\tau/2}(t_i)}^2}
 {\displaystyle\sum_{i=1}^{M}\abs{q_{\tau/2}(t_i)}^2}
 \right)^{1/2},
 \label{eq:bp1-temporal-error-l2}\\
 \norm{e_q(\tau,\tau/2)}_{\ell^\infty}
 &:=
 \frac{\displaystyle\max_{1\le i\le M}
 \abs{(\mathcal I_{\tau\to\tau/2}q_\tau)(t_i)-q_{\tau/2}(t_i)}}
 {\displaystyle\max_{1\le i\le M}\abs{q_{\tau/2}(t_i)}}.
 \label{eq:bp1-temporal-error-linf}
\end{align}
Here \(\mathcal I_{\tau\to\tau/2}\) denotes piecewise linear interpolation;
rates are computed from the base-two logarithm of successive differences.

\begin{table}[htbp]
\centering
\caption{Relative errors and convergence rates in time for BP-1.}
\label{tab:bp1-temporal-convergence}
\small
\begin{tabular}{llcccc}
\toprule
quantity & \(\tau\)
& \(\norm{e_q(\tau,\tau/2)}_{\ell^2}\) & rate
& \(\norm{e_q(\tau,\tau/2)}_{\ell^\infty}\) & rate\\
\midrule
Circularity
& \(1/128\) & \(1.78\times10^{-7}\) & -- & \(7.41\times10^{-7}\) & --\\
& \(1/256\) & \(4.44\times10^{-8}\) & \(2.00\) & \(1.88\times10^{-7}\) & \(1.98\)\\
\midrule
Centre of mass
& \(1/128\) & \(3.71\times10^{-6}\) & -- & \(7.30\times10^{-6}\) & --\\
& \(1/256\) & \(9.26\times10^{-7}\) & \(2.00\) & \(1.84\times10^{-6}\) & \(1.99\)\\
\midrule
Rise velocity
& \(1/128\) & \(2.41\times10^{-5}\) & -- & \(4.52\times10^{-5}\) & --\\
& \(1/256\) & \(6.06\times10^{-6}\) & \(1.99\) & \(1.29\times10^{-5}\) & \(1.81\)\\
\bottomrule
\end{tabular}
\end{table}

\Cref{tab:bp1-temporal-convergence} shows second-order temporal
self-convergence on \([0,T_0]\).  The full histories in
\Cref{fig:bp1-time-step-comparison} include remeshing and are not used for the
rate estimate.  Pressure and velocity for the finest run are shown in
\Cref{fig:bp1-pressure,fig:bp1-velocity}.  Both the modified and physical
energies decrease in these computations.

\begin{figure}[htbp]
\centering
\includegraphics[width=0.8\textwidth]{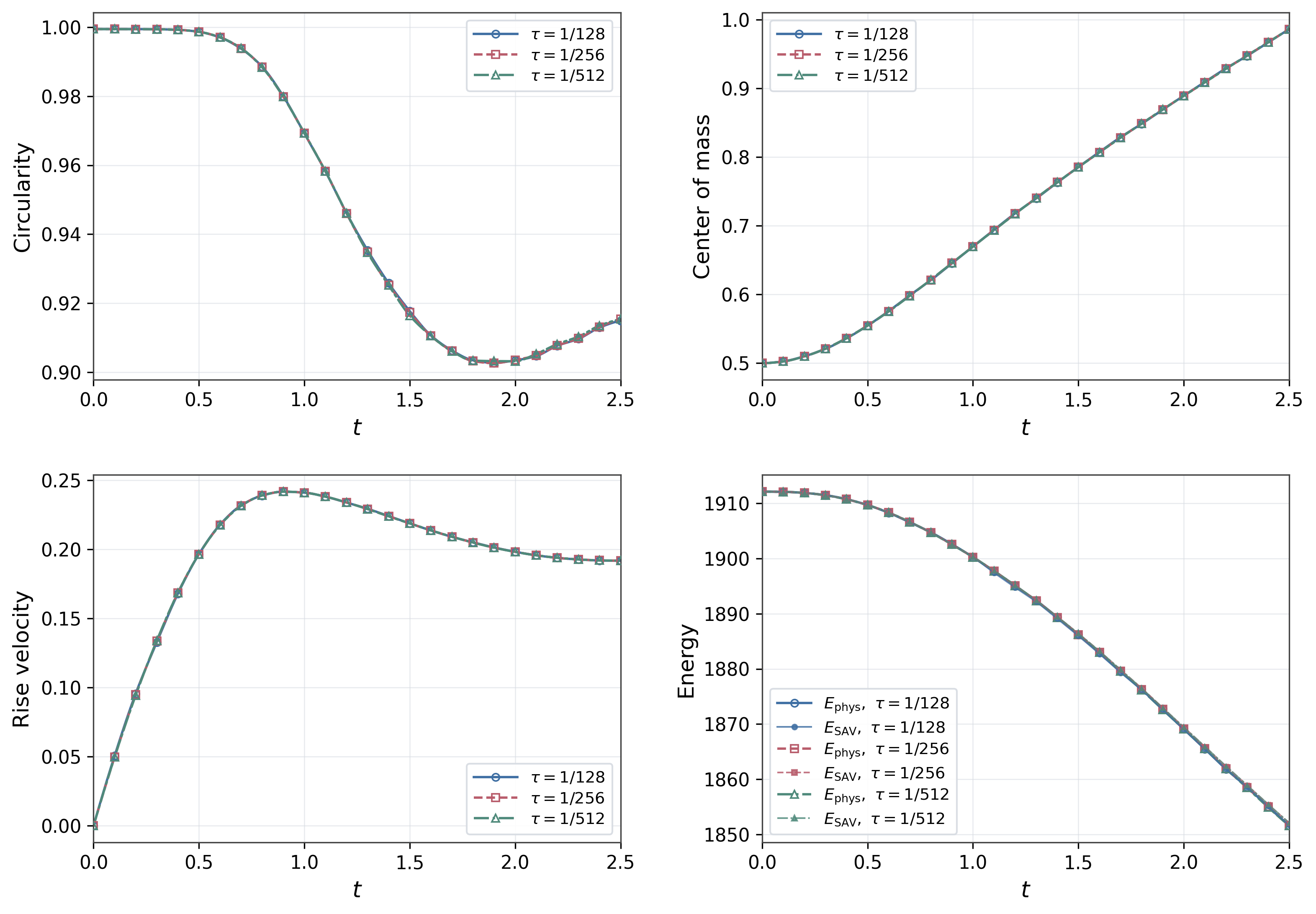}
\caption{BP-1 benchmark quantities for different time-step sizes.}
\label{fig:bp1-time-step-comparison}
\end{figure}

\begin{figure}[htbp]
\centering
\includegraphics[width=0.8\textwidth]{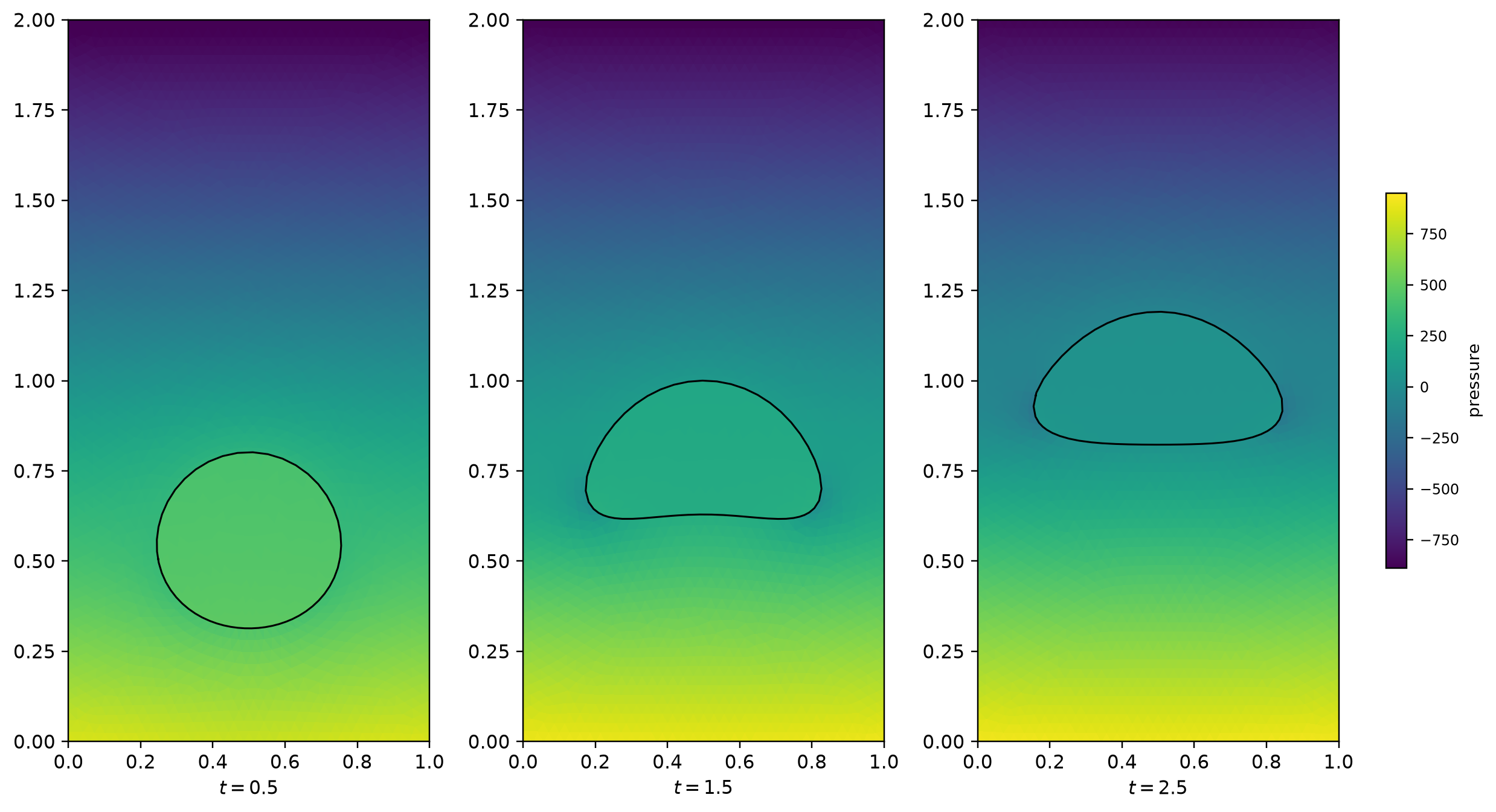}
\caption{Pressure for BP-1 at \(t=0.5,1.5,2.5\), computed with
\(\tau=1/512\).}
\label{fig:bp1-pressure}
\end{figure}

\begin{figure}[htbp]
\centering
\includegraphics[width=0.8\textwidth]{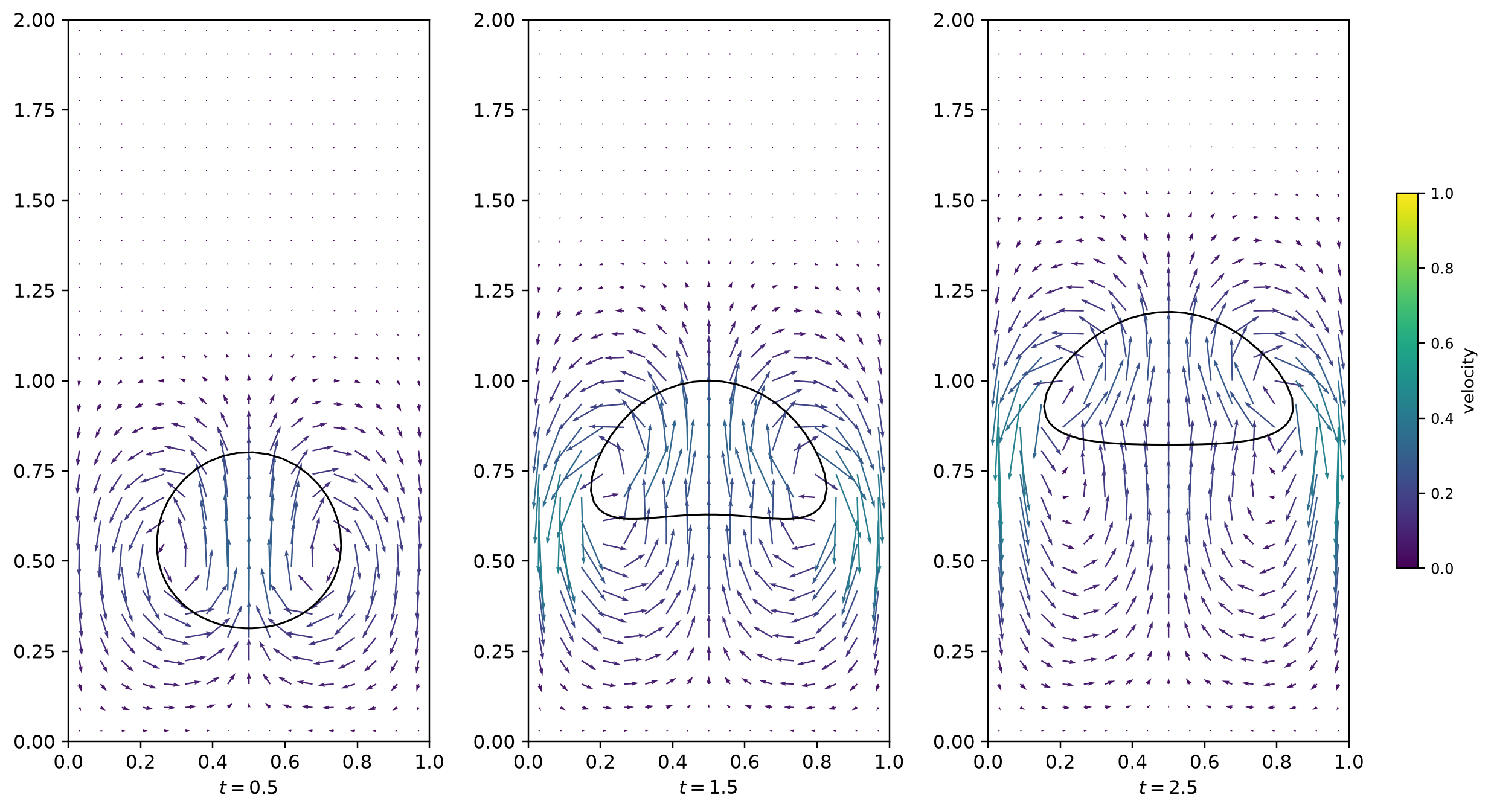}
\caption{Velocity for BP-1 at \(t=0.5,1.5,2.5\), computed with
\(\tau=1/512\).}
\label{fig:bp1-velocity}
\end{figure}

\FloatBarrier
\subsection{Benchmark problem 2}\label{sec:bubble-case2}

BP-2 uses the same geometry and boundary conditions, with the parameters in
\Cref{tab:bubble-parameters}.  The initial mesh has \(h_{\max}=0.03\) and 80
interface edges; we use \(\tau=1/256\), \(T=2.5\), and the BP-1 remeshing
procedure.  The maximum accepted-step balance residual is
\(3.829\times10^{-11}\).  The benchmark histories are shown in
\Cref{fig:bp2-benchmark-quantities}, and the pressure and velocity in
\Cref{fig:bp2-pressure,fig:bp2-velocity}.  Small late-time variations occur
in the circularity and rise velocity.  Since only one resolution is used, no
convergence order is reported.

\begin{figure}[htbp]
\centering
\includegraphics[width=0.8\textwidth]{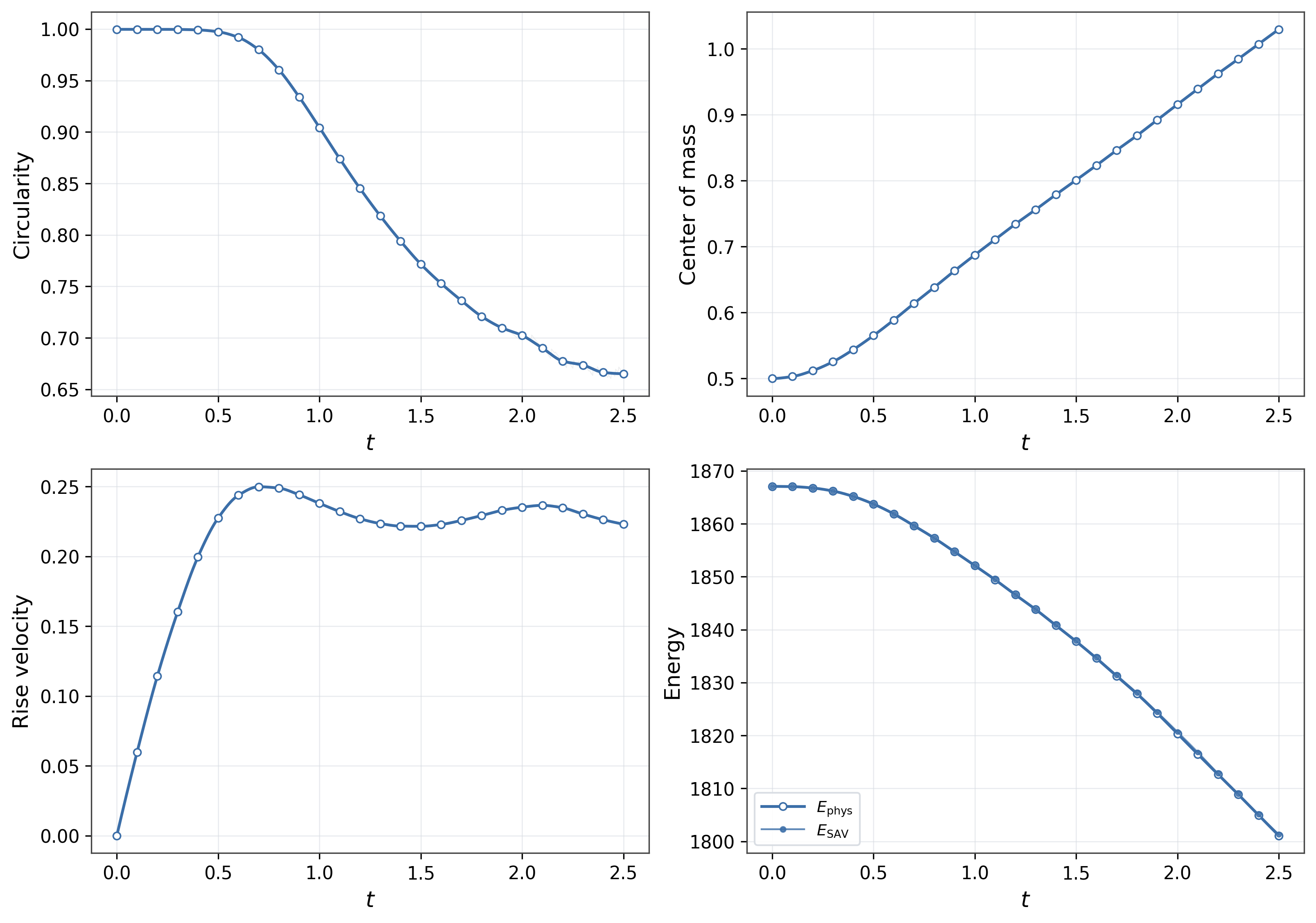}
\caption{BP-2 benchmark quantities.}
\label{fig:bp2-benchmark-quantities}
\end{figure}

\begin{figure}[htbp]
\centering
\includegraphics[width=0.8\textwidth]{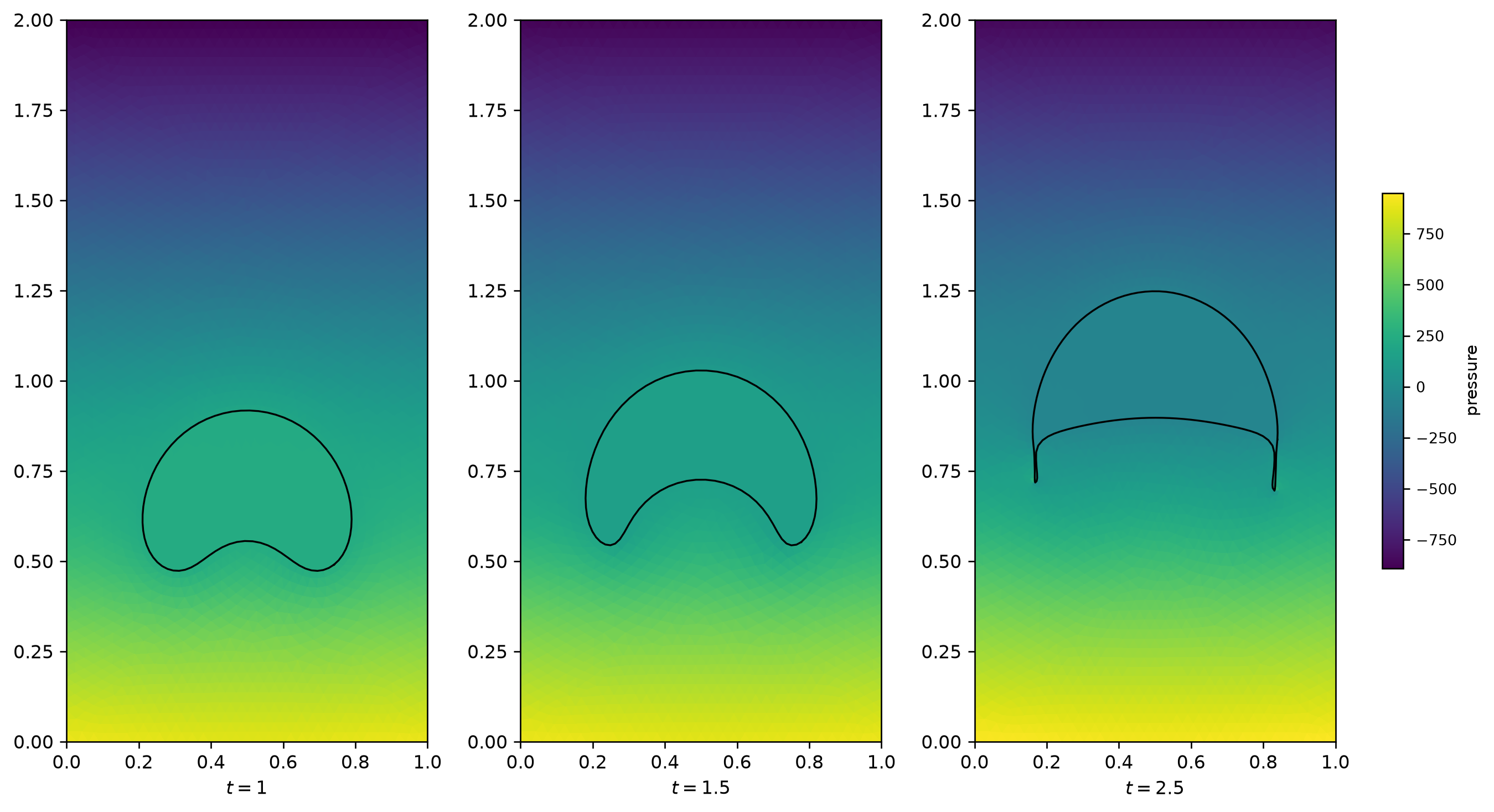}
\caption{Pressure for BP-2 at \(t=1,1.5,2.5\), computed with
\(\tau=1/256\).}
\label{fig:bp2-pressure}
\end{figure}

\begin{figure}[htbp]
\centering
\includegraphics[width=0.8\textwidth]{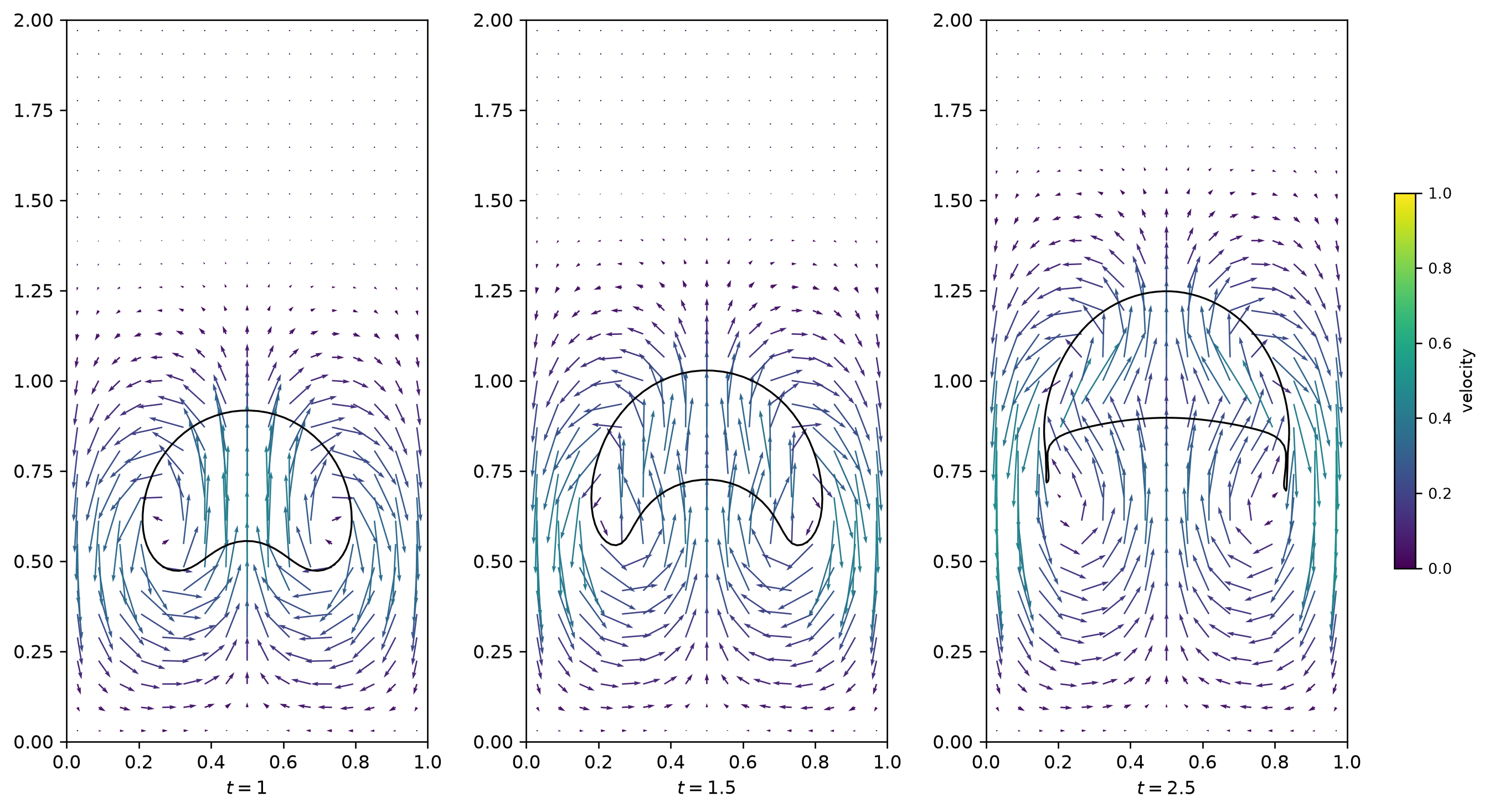}
\caption{Velocity for BP-2 at \(t=1,1.5,2.5\), computed with
\(\tau=1/256\).}
\label{fig:bp2-velocity}
\end{figure}

\bibliographystyle{abbrvnat}
\bibliography{references}

@article{AgneseNurnberg2020,
  author  = {Agnese, Marco and N{\"u}rnberg, Robert},
  title   = {Fitted front tracking methods for two-phase incompressible {N}avier--{S}tokes flow: {E}ulerian and {ALE} finite element discretizations},
  journal = {International Journal of Numerical Analysis and Modeling},
  volume  = {17},
  number  = {5},
  pages   = {613--642},
  year    = {2020},
  eprint  = {1910.14327},
  archiveprefix = {arXiv}
}

@article{BadiaCodina2006,
  author  = {Badia, Santiago and Codina, Ramon},
  title   = {Analysis of a stabilized finite element approximation of the transient convection-diffusion equation using an {ALE} framework},
  journal = {SIAM Journal on Numerical Analysis},
  volume  = {44},
  number  = {5},
  pages   = {2159--2197},
  year    = {2006},
  doi     = {10.1137/050643532}
}

@article{Bansch2001,
  author  = {B{\"a}nsch, Eberhard},
  title   = {Finite element discretization of the {N}avier--{S}tokes equations with a free capillary surface},
  journal = {Numerische Mathematik},
  volume  = {88},
  number  = {2},
  pages   = {203--235},
  year    = {2001},
  doi     = {10.1007/PL00005443}
}

@article{BarrettGarckeNurnberg2015,
  author  = {Barrett, John W. and Garcke, Harald and N{\"u}rnberg, Robert},
  title   = {A stable parametric finite element discretization of two-phase {N}avier--{S}tokes flow},
  journal = {Journal of Scientific Computing},
  volume  = {63},
  number  = {1},
  pages   = {78--117},
  year    = {2015},
  doi     = {10.1007/s10915-014-9885-2}
}

@incollection{BarrettGarckeNurnberg2020,
  author    = {Barrett, John W. and Garcke, Harald and N{\"u}rnberg, Robert},
  title     = {Parametric finite element approximations of curvature-driven interface evolutions},
  booktitle = {Geometric Partial Differential Equations---Part I},
  series    = {Handbook of Numerical Analysis},
  volume    = {21},
  pages     = {275--423},
  publisher = {Elsevier},
  year      = {2020},
  doi       = {10.1016/bs.hna.2019.05.002}
}

@article{BoffiGastaldi2004,
  author  = {Boffi, Daniele and Gastaldi, Lucia},
  title   = {Stability and geometric conservation laws for {ALE} formulations},
  journal = {Computer Methods in Applied Mechanics and Engineering},
  volume  = {193},
  number  = {42--44},
  pages   = {4717--4739},
  year    = {2004},
  doi     = {10.1016/j.cma.2004.02.020}
}

@article{BonitoKyzaNochetto2013,
  author  = {Bonito, Andrea and Kyza, Irene and Nochetto, Ricardo H.},
  title   = {Time-discrete higher-order {ALE} formulations: Stability},
  journal = {SIAM Journal on Numerical Analysis},
  volume  = {51},
  number  = {1},
  pages   = {577--604},
  year    = {2013},
  doi     = {10.1137/120862715}
}

@article{DuanLiYang2022,
  author  = {Duan, Beiping and Li, Buyang and Yang, Zongze},
  title   = {An energy diminishing arbitrary {L}agrangian--{E}ulerian finite element method for two-phase {N}avier--{S}tokes flow},
  journal = {Journal of Computational Physics},
  volume  = {461},
  pages   = {111215},
  year    = {2022},
  doi     = {10.1016/j.jcp.2022.111215}
}

@article{Dziuk1991,
  author  = {Dziuk, Gerhard},
  title   = {An algorithm for evolutionary surfaces},
  journal = {Numerische Mathematik},
  volume  = {58},
  pages   = {603--611},
  year    = {1991},
  doi     = {10.1007/BF01385643}
}

@article{Fu2020,
  author  = {Fu, Guosheng},
  title   = {Arbitrary {L}agrangian--{E}ulerian hybridizable discontinuous {G}alerkin methods for incompressible flow with moving boundaries and interfaces},
  journal = {Computer Methods in Applied Mechanics and Engineering},
  volume  = {367},
  pages   = {113158},
  year    = {2020},
  doi     = {10.1016/j.cma.2020.113158}
}

@article{GarckeNurnbergZhao2023,
  author  = {Garcke, Harald and N{\"u}rnberg, Robert and Zhao, Quan},
  title   = {Structure-preserving discretizations of two-phase {N}avier--{S}tokes flow using fitted and unfitted approaches},
  journal = {Journal of Computational Physics},
  volume  = {489},
  pages   = {112276},
  year    = {2023},
  doi     = {10.1016/j.jcp.2023.112276}
}

@article{GarckeTrautweinZhang2026,
  author  = {Garcke, Harald and Trautwein, Dennis and Zhang, Ganghui},
  title   = {Structure-preserving parametric finite element methods for two-phase {S}tokes flow based on {L}agrange multiplier approaches},
  journal = {Journal of Computational Physics},
  volume  = {560},
  pages   = {114922},
  year    = {2026},
  doi     = {10.1016/j.jcp.2026.114922},
  eprint  = {2508.12326},
  archiveprefix = {arXiv}
}

@article{HysingEtAl2009,
  author  = {Hysing, Shu-Ren and Turek, Stefan and Kuzmin, Dmitri and Parolini, Nicola and Burman, Erik and Ganesan, Sashikumaar and Tobiska, Lutz},
  title   = {Quantitative benchmark computations of two-dimensional bubble dynamics},
  journal = {International Journal for Numerical Methods in Fluids},
  volume  = {60},
  number  = {11},
  pages   = {1259--1288},
  year    = {2009},
  doi     = {10.1002/fld.1934}
}

@article{HuLeiLiTang2026,
  author  = {Hu, Jiashun and Lei, Nuo and Li, Buyang and Tang, Rong},
  title   = {Energy dissipating {ALE--MDR} method for {N}avier--{S}tokes free boundary problems with moving contact line},
  journal = {SIAM Journal on Scientific Computing},
  volume  = {48},
  number  = {3},
  pages   = {A1284--A1311},
  year    = {2026},
  doi     = {10.1137/25M1784958}
}

@misc{GarckeHuLei2026,
  author       = {Garcke, Harald and Hu, Jiashun and Lei, Nuo},
  title        = {A structure-preserving {ALE--BGN--MDR} method for {N}avier--{S}tokes free boundary problems with moving contact lines and gravity},
  year         = {2026},
  note         = {arXiv:2608.03115},
  doi          = {10.48550/arXiv.2608.03115}
}

@article{JiangSuZhang2024JCP,
  author  = {Jiang, Wei and Su, Chunmei and Zhang, Ganghui},
  title   = {A second-order in time, {BGN}-based parametric finite element method for geometric flows of curves},
  journal = {Journal of Computational Physics},
  volume  = {514},
  pages   = {113220},
  year    = {2024},
  doi     = {10.1016/j.jcp.2024.113220}
}

@article{LiMaQiu2026,
  author  = {Li, Buyang and Ma, Shu and Qiu, Weifeng},
  title   = {Optimal convergence of the arbitrary {L}agrangian--{E}ulerian interface tracking method for two-phase {N}avier--{S}tokes flow without surface tension},
  journal = {IMA Journal of Numerical Analysis},
  volume  = {46},
  number  = {1},
  pages   = {51--89},
  year    = {2026},
  doi     = {10.1093/imanum/draf003}
}

@article{ShenXuYang2018,
  author  = {Shen, Jie and Xu, Jie and Yang, Jiang},
  title   = {The scalar auxiliary variable ({SAV}) approach for gradient flows},
  journal = {Journal of Computational Physics},
  volume  = {353},
  pages   = {407--416},
  year    = {2018},
  doi     = {10.1016/j.jcp.2017.10.021}
}

@article{JiangZhangZhao2022,
  author  = {Jiang, Maosheng and Zhang, Zengyan and Zhao, Jia},
  title   = {Improving the accuracy and consistency of the scalar auxiliary variable ({SAV}) method with relaxation},
  journal = {Journal of Computational Physics},
  volume  = {456},
  pages   = {110954},
  year    = {2022},
  doi     = {10.1016/j.jcp.2022.110954}
}

@article{Lenoir-1986,
  author    = {Lenoir, Marc},
  title     = {Optimal Isoparametric Finite Elements and Error Estimates for Domains Involving Curved Boundaries},
  journal   = {SIAM Journal on Numerical Analysis},
  volume    = {23},
  number    = {3},
  pages     = {562--580},
  year      = {1986},
  doi       = {10.1137/0723036}
}

@book{GrossReusken2011,
  author    = {Gross, Sven and Reusken, Arnold},
  title     = {Numerical Methods for Two-phase Incompressible Flows},
  series    = {Springer Series in Computational Mathematics},
  volume    = {40},
  publisher = {Springer},
  address   = {Berlin},
  year      = {2011},
  doi       = {10.1007/978-3-642-19686-7}
}

@article{HughesLiuZimmermann1981,
  author  = {Hughes, Thomas J. R. and Liu, Wing Kam and Zimmermann, Thomas K.},
  title   = {Lagrangian--Eulerian Finite Element Formulation for Incompressible Viscous Flows},
  journal = {Computer Methods in Applied Mechanics and Engineering},
  volume  = {29},
  number  = {3},
  pages   = {329--349},
  year    = {1981},
  doi     = {10.1016/0045-7825(81)90049-9}
}

@article{FormaggiaNobile2004,
  author  = {Formaggia, Luca and Nobile, Fabio},
  title   = {Stability Analysis of Second-order Time Accurate Schemes for {ALE--FEM}},
  journal = {Computer Methods in Applied Mechanics and Engineering},
  volume  = {193},
  number  = {39--41},
  pages   = {4097--4116},
  year    = {2004},
  doi     = {10.1016/j.cma.2003.09.028}
}

@article{Edelmann2022,
  author  = {Edelmann, Dominik},
  title   = {Finite Element Analysis for a Diffusion Equation on a Harmonically Evolving Domain},
  journal = {IMA Journal of Numerical Analysis},
  volume  = {42},
  number  = {2},
  pages   = {1866--1901},
  year    = {2022},
  doi     = {10.1093/imanum/drab026}
}

@article{ElliottRanner2021,
  author  = {Elliott, Charles M. and Ranner, Thomas},
  title   = {A Unified Theory for Continuous-in-time Evolving Finite Element Space Approximations to Partial Differential Equations in Evolving Domains},
  journal = {IMA Journal of Numerical Analysis},
  volume  = {41},
  number  = {3},
  pages   = {1696--1845},
  year    = {2021},
  doi     = {10.1093/imanum/draa062}
}

@article{LiXiaYang2023,
  author  = {Li, Buyang and Xia, Yinhua and Yang, Zongze},
  title   = {Optimal Convergence of Arbitrary {Lagrangian--Eulerian} Iso-parametric Finite Element Methods for Parabolic Equations in an Evolving Domain},
  journal = {IMA Journal of Numerical Analysis},
  volume  = {43},
  number  = {1},
  pages   = {501--534},
  year    = {2023},
  doi     = {10.1093/imanum/drab099}
}

@article{GanesanTobiska2012,
  author  = {Ganesan, Sashikumaar and Tobiska, Lutz},
  title   = {Arbitrary {Lagrangian--Eulerian} Finite-element Method for Computation of Two-phase Flows with Soluble Surfactants},
  journal = {Journal of Computational Physics},
  volume  = {231},
  number  = {9},
  pages   = {3685--3702},
  year    = {2012},
  doi     = {10.1016/j.jcp.2012.01.018}
}

@article{YangDong2019,
  author  = {Yang, Zhiguo and Dong, Suchuan},
  title   = {An Unconditionally Energy-stable Scheme Based on an Implicit Auxiliary Energy Variable for Incompressible Two-phase Flows with Different Densities Involving Only Precomputable Coefficient Matrices},
  journal = {Journal of Computational Physics},
  volume  = {393},
  pages   = {229--257},
  year    = {2019},
  doi     = {10.1016/j.jcp.2019.05.018}
}

@article{ZhaoRen2020,
  author  = {Zhao, Quan and Ren, Weiqing},
  title   = {An Energy-stable Finite Element Method for the Simulation of Moving Contact Lines in Two-phase Flows},
  journal = {Journal of Computational Physics},
  volume  = {417},
  pages   = {109582},
  year    = {2020},
  doi     = {10.1016/j.jcp.2020.109582},
  eprint  = {2002.12009},
  archiveprefix = {arXiv}
}

@misc{MaRao2026,
  author        = {Ma, Shu and Rao, Qiqi},
  title         = {High-order Energy-stable {BGN} Parametric Finite Element Methods for Geometric Flows},
  year          = {2026},
  note          = {arXiv:2608.29877},
  eprint        = {2608.29877},
  archiveprefix = {arXiv},
  primaryclass  = {math.NA},
  doi           = {10.48550/arXiv.2608.29877}
}

\end{document}